\newtheorem {cor}{\textbf Corollary}[section]
\newtheorem{teo}[cor]{\textbf Theorem}
\newtheorem{lem}[cor]{\textbf Lemma}
\newtheorem{prop}[cor]{\textbf Proposition}
\renewcommand{\leq}{\leqslant}
\renewcommand{\geq}{\geqslant}
\theoremstyle{definition}
\newcommand{\GL}[2]{{\sf GL(}#1, #2)}
\newcommand{\Aut}[1]{{\sf Aut(}#1)}
\newcommand{\Sym}[1]{{\sf Sym(}#1)}
\newcommand{\F}{\mathbb{F}}
\newcommand{\N}{\mathbb{N}}
\newcommand{\agl}[2]{{\sf AGL}(#1,#2)}
\renewcommand{\P}{\mathscr P}
\newcommand{\Cay}[2]{{\sf Cay}(#1,#2)}
\title[Generalized Paley graphs and synchronization]
{Cliques and colorings in generalized
Paley graphs and an approach to synchronization}
\author{Csaba Schneider and Ana Silva}
\address[Schneider]{Departamento de Matem\'atica\\
Instituto de Ciências Exatas\\
Universidade Federal de Minas Gerais\\
Av.\ Ant\^onio Carlos 6627\\31270-901\\
Belo Horizonte, MG, Brazil}
\address[Silva]{Departamento de Matemática\\
Faculdade de Ciências da
Universidade de Lisboa\\
Campo Grande,
1749-016 Lisboa,
Portugal}
\begin{document}

\begin{abstract}
Given a finite field, one can form a directed 
graph using the field elements as vertices
and connecting two vertices if their difference lies 
in a fixed subgroup of the multiplicative group.
If $-1$ is contained
in this fixed subgroup, 
then we obtain an undirected graph that is referred to as 
a generalized Paley graph.
In this paper we study generalized Paley graphs whose clique and 
chromatic numbers coincide and link this theory to the study of the 
synchronization property in 1-dimensional primitive affine permutation groups.
\end{abstract}

\maketitle

\section{Introduction}

The synchronization property for permutation groups emerged from
problems related to the \v Cern\'y conjecture~\cite{cerny} in 
the theory of finite state automata and was first 
independently defined by 
Jo\~ao Ara\'ujo and Benjamin Steinberg;
see \cite{joao,arnoldandsteinberg}.
As was shown by
Arnold and Steinberg~\cite{arnoldandsteinberg}, a synchronizing group is always primitive.
However,  the converse is not true, which makes it interesting to study
the problem: Which primitive groups are synchronizing? 
Neumann~\cite{neumann} proved several basic properties of synchronizing groups,
and he also observed that a primitive group that can be embedded into a 
wreath product in 
product action is always non-synchronizing. 
This shows, in the terminology of Cameron~\cite{cameron}, 
that a synchronizing group
must be  a basic primitive group. 
Therefore the
O'Nan-Scott type of a 
synchronizing group is almost simple, affine, or 
simple diagonal. The study of precisely which groups in these classes are
synchronizing leads to deep problems in combinatorics, graph theory, and 
finite geometry. For instance, deciding whether a classical group
in its action on the associated polar space is synchronizing requires
a study of spreads and ovoids of the polar space;
see\cite[Theorem~9 of Part~6]{talk}.

In this paper we study which of the members of a particular class of affine 
groups are synchronizing. These groups are primitive subgroups of the 
1-dimensional affine groups $\agl 1{q}$ acting on the set $\F_{q}$
for odd prime-powers $q$.
Neumann~\cite{neumann} introduced a graph theoretic characterization
of non-synchronizing groups in terms of cliques and colorings of 
their generalized undirected orbital graphs. The undirected orbital
graphs of the groups we study are isomorphic to certain graphs that can
be considered as generalizations of the Paley graphs of finite fields, first
defined in~\cite{Paley}.
Such graphs were also considered in~\cite{praeger}.
Hence, using Neumann's characterization of non-synchronizing groups
 (Lemma~\ref{synchgraphs}), the study of the synchonization
property in
such affine groups leads to an investigation of 
the clique and chromatic numbers of generalized Paley graphs.

Calculating the clique and the chromatic numbers can be 
notoriously difficult already for Paley graphs as shown by~\cite{GenPaley};
see also~\cite{MO} for a recent discussion. 
Hence our results are far from conclusive. Nevertheless, we  
prove some theorems that describe situations when the chromatic and the clique 
numbers of generalized Paley graphs are equal (Theorem~\ref{main1}), 
and we apply
these results to determine if certain 1-dimensional affine 
groups are synchronizing (Theorem~\ref{main2}).

In Section~\ref{sec:synch}, we give a brief summary of synchronizing groups. 
In Section~\ref{sec:paley}, we link affine synchronizing groups to generalized 
Paley graphs. In Section~\ref{sec:clique}, we prove our main result 
(Theorem~\ref{main1}) concerning
the clique and chromatic numbers of generalized Paley graphs that state 
several sufficient or necessary conditions for these numbers to be equal.
Finally, in Section~\ref{sec:groups}, we apply Theorem~\ref{main1} in the
study of synchronization in 1-dimensional affine groups.

\subsection*{Acknowledgment}
The research that led to this paper was carried out as part of the second 
author's MSc project in the Center of Algebra of the University of Lisbon.
We both are grateful to the Center for supporting our work.

\section{Synchronizing Groups}\label{sec:synch}

%In this section we define a synchronizing group and we present a 
%brief summary of the results concerning the synchronization property 
%in permutation groups focusing on those which are interesting for the 
%study of primitive groups with cycalic point stabilizers. In particular 
%we present a characterization made by Peter Neumann \cite{neumann} 
%of the synchronization property using graph theory (Theorem \ref{PNeumann}).

Let $\Omega$ be a finite set and let $\P$ be a partition of $\Omega$. 
A subset $S$ of
$\Omega$ is called a \emph{section} for $\mathscr P$ if $S$ contains
precisely one element from each part of $\mathscr P$. Let $G\leq \Sym
\Omega$.
If $\mathscr P$ admits a section $S$ such that $Sg$ is a
section for all $g \in G$ then $\mathscr P$ is said to be a
\emph{section-regular} partition for $G$ or a \mbox{\emph{$G$-regular}}
partition. In this case we say that the section $S$ {\em witnesses
the $G$-regularity} of $\P$.

The partitions $\{\Omega\}$ and $\{\{\omega\}\mid \omega\in\Omega\}$ are
clearly $G$-regular for any permutation group $G$ 
acting on $\Omega$. These partitions are
said to be \emph{trivial}.
A permutation group $G$ acting on a set $\Omega$ is called
\emph{synchronizing} if $G\neq1$ and there are no non-trivial
$G$-regular partitions of $\Omega$. It follows that synchronizing
groups are transitive and primitive, since a $G$-invariant partition is
clearly $G$-regular.  Furthermore, 
it is not hard to see that 
2-homogeneous groups are synchronizing.

The following lemma summarizes the basic properties of $G$-regular 
partitions for transitive groups; see~\cite[Section~2]{neumann}.

\begin{lem}\label{synchlemma}
Let $G$ be a transitive group acting on a set $\Omega$. 
\begin{enumerate}
\item A $G$-regular partition 
is uniform, that is, all its parts have the same size.
\item If $G$ is a primitive group and $\P$ is a non-trivial 
$G$-regular 
partition, then $|\P|\geq 3$ and $|P|\geq 3$ for all $P\in\P$.
\end{enumerate}
\end{lem}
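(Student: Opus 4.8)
The plan is to prove both parts by exploiting the defining property of a $G$-regular partition together with the transitivity (and in part (2), primitivity) of $G$. Let $\P$ be a $G$-regular partition witnessed by a section $S$, so that $Sg$ is a section for every $g\in G$.

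For part (1), I would first observe that since $G$ is transitive, the parts of $\P$ can be ``moved around'' by group elements in a way that compares their sizes. More precisely, take any two parts $P_1,P_2\in\P$ and any points $\alpha\in P_1$, $\beta\in P_2$; by transitivity there is $g\in G$ with $\alpha g=\beta$. The key step is to count incidences between the translated section $Sg$ and the parts of $\P$: because $S$ meets each part exactly once and $Sg$ is again a section (hence also meets each part exactly once), applying $g$ sets up a bijective correspondence that constrains how points of a single part $P$ distribute under $g$. I would make this precise by a double-counting argument: summing $|P\cap Sg|=1$ over all $g$ in a suitable subset, or equivalently using that $g$ permutes $\Omega$ and carries sections to sections, to force $|P_1|=|P_2|$. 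The cleanest route is probably to show that the number of translates $Sg$ is a common multiple controlled by $|S|=|\P|$ and to derive $|\Omega|=|S|\cdot|P|$ for every part $P$, giving uniformity.

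For part (2), assuming $G$ primitive and $\P$ non-trivial, I would argue that the extreme cases $|\P|=2$, $|\P|\leq\ |P|=2$ cannot occur. If some part had size $2$ (or if there were only two parts), the section-regularity condition becomes very rigid: a section picks one of two points from each part, and requiring $Sg$ to remain a section for all $g$ would force a $G$-invariant structure on $\Omega$—for instance a block system or an equivalence relation—contradicting primitivity, or else force $\P$ to be trivial. Concretely, for $|P|=2$ I expect the regularity condition to produce a nontrivial $G$-congruence (a pairing of points preserved by $G$), and for $|\P|=2$ the section is essentially a choice function between two complementary blocks, again yielding an invariant partition into two classes, which a primitive group cannot admit nontrivially. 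Combining these with part~(1), which forces all parts to have the common size $|P|=|\Omega|/|\P|$, rules out $|\P|=2$ and $|P|=2$ simultaneously.

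The main obstacle will be the rigidity argument in part (2): translating ``$Sg$ is a section for all $g$'' into the existence of a genuine $G$-invariant partition (to contradict primitivity) is the delicate point, since a section-regular partition need not itself be $G$-invariant. I would expect to need a careful analysis of how the sections in the orbit $\{Sg:g\in G\}$ intersect, perhaps showing that when parts are too small the sections are forced to be unions of blocks of an invariant partition. The uniformity from part~(1) should feed directly into this, so I would prove~(1) first and use it throughout~(2).
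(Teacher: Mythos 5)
The paper does not prove this lemma at all---it is quoted from Neumann \cite{neumann}---so your attempt has to be judged on its own merits. Part (1) of your sketch is essentially sound: the double-counting you gesture at does work, namely
$\sum_{g\in G}|Sg\cap P| = |G|$ (each summand is $1$ because $Sg$ is a section), while writing the same sum as $\sum_{s\in S}\sum_{\omega\in P}|\{g : sg=\omega\}|$ and using transitivity (each inner count equals $|G|/|\Omega|$) gives $|S|\,|P|\,|G|/|\Omega|$; hence $|S|\,|P|=|\Omega|$ for every part $P$, which is uniformity. Your opening move of choosing a single $g$ with $\alpha g=\beta$ leads nowhere, but the averaging argument you fall back on is the correct one and only needs to be written out.

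Part (2) has a genuine gap, which you yourself flag: you do not say how ``$Sg$ is a section for all $g$'' produces a $G$-\emph{invariant} partition, and your guesses point in the wrong direction. For $|P|=2$ the object that becomes $G$-invariant is \emph{not} the pairing $\P$ (which need not be preserved by $G$) but the two-part partition $\{S,\Omega\setminus S\}$; for $|\P|=2$ it is $\P=\{P_1,P_2\}$ itself. The missing idea in both cases is the same: fix a part $\{\alpha,\beta\}$ (resp.\ the section $S=\{\alpha,\beta\}$) and consider the undirected orbital graph $\Gamma$ whose edge set is the $G$-orbit of $\{\alpha,\beta\}$. Section-regularity forces every edge of $\Gamma$ to have exactly one endpoint in $S$ (resp.\ one endpoint in each $P_i$), so $\{S,\Omega\setminus S\}$ (resp.\ $\{P_1,P_2\}$) is a bipartition of $\Gamma$. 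Since $G$ is primitive, every non-diagonal orbital graph is \emph{connected} (Higman's criterion), and a connected bipartite graph has a unique bipartition; as $G$ preserves $\Gamma$, it must preserve that bipartition, giving a non-trivial $G$-invariant partition and contradicting primitivity. Without the connectedness-of-orbital-graphs input, ``rigidity'' alone does not yield an invariant structure, so as written your argument for part (2) does not close.
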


As we mentioned in the introduction, Neumann~\cite{neumann} showed that
a primitive group that can be embedded into a wreath product in product
action is non-synchronizing. In the following lemma we rephrase this
result in the language of cartesian decompositions, introduced in~\cite{csaba}.
A \emph{cartesian decomposition}\index{cartesian decomposition} of a set $\Omega$ is a set $\Sigma =\{$$\mathscr P$$_1, \ldots , $$\mathscr P$$_t\}$ of \mbox{non-trivial} partitions of $\Omega$ such that
\begin{center}
$|P_1 \cap \cdots \cap P_t| =1$ for all $P_1 \in $ $\mathscr P$$_1, \ldots, P_t \in $ $\mathscr P$$_t$.
\end{center}

\begin{lem}\label{cartdecomp}
Let $G$ be a primitive group acting on a finite set $\Omega$. 
If $G$ preserves a cartesian decomposition of $\Omega$, 
then $G$ is non-synchronizing.
\end{lem}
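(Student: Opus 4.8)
The plan is to show that a cartesian decomposition gives rise to a nontrivial $G$-regular partition, so that $G$ fails to be synchronizing by the very definition. Let $\Sigma=\{\P_1,\dots,\P_t\}$ be the $G$-invariant cartesian decomposition, and assume $t\geq 2$ and each $\P_i$ is nontrivial (which is part of the definition). I would single out the first partition $\P_1$ and argue that it is itself $G$-regular. The point of the cartesian-decomposition condition $|P_1\cap\cdots\cap P_t|=1$ is that it lets me coordinatize $\Omega$: each point $\omega$ corresponds to the unique tuple $(P_1,\dots,P_t)$ of parts, one from each $\P_i$, whose intersection is $\{\omega\}$. This identifies $\Omega$ with the cartesian product $\P_1\times\cdots\times\P_t$, and this is exactly the structure that makes a section for $\P_1$ easy to build.

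Concretely, I would construct an explicit section $S$ witnessing the $G$-regularity of $\P_1$ as follows. Fix, for each $i\geq 2$, a single part $Q_i\in\P_i$, and let $S=\{\,\omega_{P}: P\in\P_1\,\}$ where $\omega_P$ is the unique point lying in $P\cap Q_2\cap\cdots\cap Q_t$. By the coordinatization this $S$ meets every part of $\P_1$ exactly once, so it is a genuine section. The key computation is then to check that $Sg$ is again a section for every $g\in G$. Because $G$ preserves $\Sigma$, the element $g$ permutes the partitions among themselves; in particular it sends $\P_1$ to some $\P_j$ and sends $Q_2,\dots,Q_t$ to parts of the corresponding partitions. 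Applying $g$ to the defining intersections, $S g$ is precisely the set of unique points lying in $(P g)\cap(Q_2 g)\cap\cdots\cap(Q_t g)$, and since the $Pg$ range over all parts of $\P_j=\P_1 g$ while the other coordinates are fixed, $Sg$ is a section for $\P_1 g$. One must then note that a section for $\P_1 g$ is, by uniformity and the cartesian structure, also a section for $\P_1$ itself; this is where I expect to have to be slightly careful, reconciling ``section for $\P_1 g$'' with ``section for $\P_1$''.

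The cleaner route, which I would actually adopt to avoid that last subtlety, is to keep track of how $g$ permutes the whole collection $\Sigma$ and to choose the fixed coordinates $Q_i$ compatibly. Since $G$ acts on the finite set $\Sigma$ by permuting its members, and the condition ``$Sg$ is a section for $\P_1$'' is what I need, I would instead verify the slightly stronger and more symmetric statement that the family of all such coordinate sections is permuted among itself by $G$. This sidesteps the mismatch by showing that $\P_1$ is $G$-regular directly: the translate $Sg$ coincides with one of the coordinate sections built from the image coordinates, and each such coordinate set is a section for $\P_1$ because the cartesian condition is symmetric in the roles of the partitions.

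Finally, I would invoke the definition: $\P_1$ is nontrivial (it is one of the partitions in a cartesian decomposition, hence nontrivial by definition) and $G$-regular, so there exists a nontrivial $G$-regular partition of $\Omega$, whence $G$ is not synchronizing. The main obstacle, as flagged above, is purely bookkeeping: carefully managing how a group element permutes the components $\P_i$ and the fixed parts $Q_i$, and confirming that the resulting translate is still a section of the intended partition rather than of a permuted copy. Lemma~\ref{synchlemma} is available if I want to invoke uniformity, but the core argument is the coordinatization supplied by the cartesian decomposition together with its symmetry under the roles of the $\P_i$.
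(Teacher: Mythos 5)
There is a genuine gap, and it is exactly at the point you flagged. Your section $S$ is built by freezing one part $Q_i\in\P_i$ for each $i\geq 2$ and letting the $\P_1$-coordinate run free. Such a set is a section for $\P_1$, but it is \emph{contained in a single part} of every other $\P_i$ (namely $Q_i$). Now take $g\in G$ with $\P_1g=\P_j$ for some $j\neq 1$ (this happens for most $g$ unless $G$ stabilizes $\P_1$, in which case $G$ would be imprimitive). Then $Sg$ is a coordinate section for $\P_j$, hence lies entirely inside the single part $Q_kg$ of $\P_1$, where $k$ is the index with $\P_kg=\P_1$; since $\P_1$ is non-trivial, $Sg$ is not a section for $\P_1$. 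Your proposed repair does not close this: it is simply false that ``each such coordinate set is a section for $\P_1$ by symmetry'' --- a coordinate section attached to $\P_j$ with $j\neq1$ meets only one part of $\P_1$. Showing that the family of coordinate sections is permuted by $G$ therefore proves nothing about $G$-regularity of $\P_1$.

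The paper's construction avoids this by taking a \emph{diagonal} section rather than freezing coordinates: it first uses primitivity to show $G$ is transitive on $\Sigma$ (if not, for a proper orbit $\{\P_1,\dots,\P_s\}$ the intersections $P_1\cap\cdots\cap P_s$ form a non-trivial block system, a contradiction), so all $|\P_i|$ are equal; then it fixes bijections $\alpha_i:\P_1\to\P_i$ and sets $S=\{\omega_P: P\in\P_1\}$ with $\omega_P$ the unique point of $P\cap P\alpha_2\cap\cdots\cap P\alpha_t$. Because every coordinate of $\omega_P$ varies bijectively with $P$, this $S$ is simultaneously a section for \emph{every} $\P_i\in\Sigma$; hence for any $g\in G$, $S$ is a section for $\P_1g^{-1}$ and so $Sg$ is a section for $\P_1$. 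Your write-up is missing both ingredients: the transitivity-on-$\Sigma$ step (which is where primitivity is actually used, and which guarantees the bijections $\alpha_i$ exist) and the diagonal choice that makes $S$ a section for all the partitions at once.
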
 
\begin{proof}
Let $\Sigma=\{$$\mathscr P$$_1, \ldots , $$\mathscr P$$_t\}$ be a cartesian decomposition of $\Omega$ which is invariant under $G$.
We start by showing that the group $G$ must be transitive on $\Sigma$.
Assume by contradiction that $G$ is intransitive on $\Sigma$. Then we may assume without loss of generality that $\Sigma_1=\{$$\mathscr P$$_1,\ldots,$$\mathscr P$$_s\}$, with $s<t$,  is a $G$-orbit. 

Let $P_i \in \mathscr P_i$, for $i \in \{1,\ldots,s\}$, and set $B=P_1\cap\cdots\cap P_s$. We claim that $B$ is a non-trivial block for $G$. Since the partitions $\mathscr P_j$ are non-trivial for all $j \in \{1,\ldots,t\}$, we have that $B \not= \Omega$.
Next, we show that  $|B|>1$.  Let $P_r, P_r' \in $ $\mathscr P$$_r$ with $r>s$ and such that $P_r\not= P_r'$. Then, since $\Sigma$ is a cartesian decomposition of $\Omega$, it follows that $B\cap P_r \not=\emptyset$ and $B \cap P_r'\not= \emptyset$. As $P_r\cap P_r'=\emptyset$, we obtain $B$ contains at least two elements, as was claimed.
Now let $g\in G$. If 
$\{P_1g,\ldots,P_sg\}=\{P_1,\ldots,P_s\}$, then $Bg=B$, while otherwise, 
$Bg\cap B=\emptyset$. Thus $B$ is non-trivial a block for $G$, which
is a contradiction since we assume that $G$ is primitive.
Hence $G$ is transitive on the cartesian decomposition $\Sigma$.

Next, we  show that $\P_1$ is a $G$-regular partition of $\Omega$. 
By the first part of the proof, $G$ is transitive on $\Sigma$.
Then $|$$\mathscr P$$_i|=|$$\mathscr P$$_j|$ for all $\mathscr P_i,\ \mathscr P_j \in \Sigma$.
For all $i \in \{1,\ldots,t\}$, fix a bijection $\alpha_i : $$\mathscr P$$_1 \rightarrow $$\mathscr P$$_i$. 
%We have, for each  part $P$ of $\mathscr P$$_1$ and for $j \in \{2,\ldots,t\}$,% that $P\alpha_j$ is a part of $\mathscr P$$_j$. 
Define, for $P\in\mathscr P_1$ the element 
$\omega_P\in\Omega$ to be the unique element in $P\cap P\alpha_2\cap\cdots\cap P\alpha_t$. Set $S=\{\omega_P : P \in $ $\mathscr P_1\}$. Note that $S$ is a 
section for all $\mathscr P$$_i \in \Sigma$. 
Thus if  $g\in G$, then $S$ is a section for $\P_1g^{-1}$, which gives  
 that $Sg$ is a section for $\P_1$.
Therefore $S$ witnesses the section-regular property for $\mathscr P$$_1$ and thus $\mathscr P_1$ is a non-trivial $G$-regular partition.
Hence $G$ is a non-synchronizing group.
\end{proof}

Neumann introduced in~\cite{neumann} a graph theoretic characterization
of synchronizing groups. If $G$ is a permutation group acting on 
$\Omega$ then the $G$-action on $\Omega$ can naturally be extended to the
sets
\begin{eqnarray*}
\Omega^2&=&\{(\alpha,\beta)\mid\alpha,\ \beta\in\Omega\}\mbox{ and}\\
\Omega^{\{2\}}&=&\{\{\alpha,\beta\}\mid\alpha,\ \beta\in\Omega\mbox{ with }
\alpha\neq\beta\}.
\end{eqnarray*}

Suppose that $G$ is transitive on $\Omega$.
The $G$-orbits in $\Omega^2$ are called {\em orbitals} and we will refer
to the $G$-orbits on $\Omega^{\{2\}}$ as {\em undirected orbitals}. 
For any subset $\Delta$ of $\Omega^2$, the pair $(\Omega,\Delta)$ is a
directed graph, while for such a subset $\Delta$ in $\Omega^{\{2\}}$, 
the pair $(\Omega,\Delta)$ is an undirected simple graph. 
In this paper the word `graph' will mean an underected simple graph.
If we refer to a directed graph, then we will write `directed graph'.
Such a graph or directed graph
admits $G$ as a group of automorphisms if and only if $\Delta$ is 
$G$-invariant; that is, it is a union of orbitals or undirected orbitals.

A complete subgraph of a graph $\Gamma$ is called a {\em clique}. The largest 
among the cardinalities of the cliques 
of a graph is said to be the {\em clique number}. An empty 
subgraph of $\Gamma$ is said to be an {\em independent set} and the largest
among the 
cardinalities of the independent sets is 
the {\em independence number}.
%The clique and the independence numbers of $\Gamma$ are 
%denoted by $\omega(\Gamma)$ and $\alpha(\Gamma)$, respectively. 
The 
{\em chromatic number} is the smallest number of colors necessary 
to color $\Gamma$ such that adjacent vertices receive
different colors. 
%is denoted by $\chi(\Gamma)$. 

\begin{lem}[Theorem 10 of Chapter 5 \cite{talk}]\label{synchgraphs}
A transitive group $G$ acting on $\Omega$ is non-synchronizing
if and only if there is a $G$-invariant subset $\Delta$ of $\Omega^{\{2\}}$ 
such that the chromatic number of the 
undirected graph $(\Omega,\Delta)$ is equal to its clique number.
\end{lem}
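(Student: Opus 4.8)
The plan is to prove the equivalence by translating between optimal colourings and maximum cliques on the one hand and section-regular partitions on the other, using two elementary facts about any finite graph $\Gamma=(\Omega,\Delta)$: first, the clique number never exceeds the chromatic number; and second, if these two numbers coincide, say both equal $k$, then every $k$-clique meets each class of any $k$-colouring in exactly one vertex. I will write $\omega(\Gamma)$ and $\chi(\Gamma)$ for the clique and chromatic numbers, and recall that $\Delta$ being $G$-invariant is precisely what makes each $g\in G$ an automorphism of $\Gamma$.

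For the ``if'' direction I would start from a $G$-invariant $\Delta$ with $\omega(\Gamma)=\chi(\Gamma)=k$ and $1<k<|\Omega|$. Fixing an optimal colouring, its $k$ colour classes form a partition $\mathscr P$ of $\Omega$ into independent sets, which is non-trivial because $1<k<|\Omega|$. Next I would fix a clique $C$ of size $k$; since each colour class is independent it can contain at most one vertex of $C$, and counting forces $C$ to contain exactly one vertex from each class, so $C$ is a section for $\mathscr P$. The point is then that for every $g\in G$ the image $Cg$ is again a $k$-clique, hence again a section for $\mathscr P$ by the same counting; thus $C$ witnesses the $G$-regularity of the non-trivial partition $\mathscr P$ and $G$ is non-synchronizing.

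For the ``only if'' direction I would take a non-trivial $G$-regular partition $\mathscr P$ witnessed by a section $S$ with $|S|=|\mathscr P|=k$, and build the graph directly by declaring two vertices adjacent when they lie together in some translate of $S$:
$$\Delta=\{\{\alpha,\beta\}\in\Omega^{\{2\}} : \alpha,\beta\in Sg \text{ for some } g\in G\}.$$
This $\Delta$ is $G$-invariant because $\{\alpha,\beta\}\subseteq Sg$ gives $\{\alpha h,\beta h\}\subseteq S(gh)$. Since each $Sg$ is a section it meets every part of $\mathscr P$ once, so no edge joins two vertices of the same part; hence the parts are independent sets and colouring by parts gives $\chi(\Gamma)\le k$, while $S=S\cdot 1$ is itself a clique, giving $\omega(\Gamma)\ge k$. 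With the general bound $\omega(\Gamma)\le\chi(\Gamma)$ this pins both numbers to $k$.

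The one delicate point, and the step I expect to need the most care, is the transversal property of maximum cliques that drives the ``if'' direction; this is where the hypothesis $\omega=\chi$ is genuinely used. I would also flag the degenerate extremes: the empty graph and the complete graph always satisfy $\omega=\chi$ yet produce only the trivial partitions $\{\Omega\}$ and the partition into singletons, so the equivalence must be read with $\Delta$ a non-empty proper $G$-invariant set, equivalently with $1<k<|\Omega|$.
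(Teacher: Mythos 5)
Your proof is correct. Note that the paper does not prove this lemma at all --- it is quoted from Cameron's lecture notes --- so there is no in-paper argument to compare against; what you have written is precisely the standard argument (due to Neumann/Cameron): colour classes of an optimal colouring give the non-trivial section-regular partition, a maximum clique gives the witnessing section via the pigeonhole observation that a $k$-clique meets each of $k$ independent colour classes exactly once, and conversely the union of the translates $Sg$ of a witnessing section, each turned into a complete graph, gives a $G$-invariant graph with $\omega=\chi=|\P|$. Both directions check out, including the $G$-invariance of your $\Delta$ and the non-triviality of the resulting partition and graph.

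Your closing caveat is also well taken and worth keeping: as literally stated the lemma is false, since $\Delta=\emptyset$ (with $\omega=\chi=1$) and $\Delta=\Omega^{\{2\}}$ (with $\omega=\chi=|\Omega|$) are always $G$-invariant, so the ``if'' direction must be read with $\Delta$ non-empty and proper, equivalently $1<\omega(\Gamma)=\chi(\Gamma)<|\Omega|$. This is harmless for the way the lemma is applied later in the paper (the graphs $\Gamma_{q,\bar m}$ considered there are genuine undirected orbital graphs, hence non-trivial), but your reading is the correct one and matches the intended statement in the cited source.
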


We will characterize the undirected orbitals of 1-dimensional
affine groups in Lemma~\ref{UndirectedOrbits} using 
the following connection between orbitals and suborbits; 
see~\cite[Section~3.2]{dixonandmortimer}.
Suppose that $G$ is a transitive group acting on $\Omega$.
For a fixed $\alpha\in\Omega$, the correspondence
$\Delta\mapsto\Delta(\alpha)$ where 
$\Delta(\alpha)=\{\beta\mid(\alpha,\beta)\in\Delta\}$ is a bijection
between the set of $G$-orbitals and the set of orbits of the stabilizer 
$G_\alpha$. The orbits of the stabilizer $G_\alpha$ are referred to
as {\em suborbits}. If $\Delta$ is an orbital, then so is the set
$\Delta'=\{(\beta,\gamma)\mid(\gamma,\beta)\in\Delta\}$, and $\Delta'$ is 
called the {\em paired orbital} of $\Delta$. In this case 
we will also say that the 
suborbit $\Delta'(\alpha)$ is the {\em paired suborbit} of $\Delta(\alpha)$. 
The orbital $\{(\beta,\beta)\mid\beta\in\Omega\}$ is
referred to as the {\em diagonal orbital}.
The undirected orbitals are in one-to-one correspondence with the set of
sets 
$\{\Delta,\Delta'\}$ where $\Delta$ and $\Delta'$ are
non-diagonal  paired orbitals (such a 
set can have one or two elements depending on whether $\Delta'=\Delta$), and
hence
they are also in one-to-one correspondence with
the set of sets $\{\Delta(\alpha),\Delta'(\alpha)\}$ where $\Delta(\alpha)$
and $\Delta'(\alpha)$ are paired suborbits that are 
not equal to $\{\alpha\}$.

%\subsection{On cartesian Decompositions}

%% Peter Neumann gave a characterization of the synchronization property using graph theory. This characterization is fundamental for the proofs of Theorem~\ref{main2} and Theorem~\ref{main3} and it is also the motivation for the study of generalized Paley graphs as a tool to characterize synchronizing permutation groups.

%% \begin{teo}[\cite{PNeumann}]\label{PNeumann}
%% Let $G$ be a primitive group acting on a set $\Omega$. Then $G$ is non-synchronizing if and only if there is a proper non-empty $G$-invariant subset  $E\subset\Omega^{\{2\}}$ such that the graph $\Gamma_E=(\Omega,E)$ is suitable.
%% \end{teo}

%% Therefore the study of synchronization in permutation groups can be transformed in the study of the equality between the clique number and chromatic number of graphs with edge-set invariant under the action of a group, so called undirected orbital graphs.
%% Hence, using the characterization of Theorem~\ref{PNeumann}, to conclude that a primitive group is synchronizing, we have to consider all possible undirected orbital graphs  and check whether their clique and chromatic numbers coincide. If we do not  find a graph in such conditions then the group is synchronizing.
%%  The surprising fact is that some of those undirected orbital graphs for the groups that we will consider are isomorphic to generalized Paley graphs.

\section{Generalized Paley graphs as undirected orbital graphs}\label{sec:paley}

The objective of this paper is to study which of the 
subgroups of the 1-dimensional affine general linear groups
are synchronizing. 
These permutation groups    can be constructed as follows. Let $q$ be 
an odd prime-power and let $m$ be a divisor of $q-1$. 
We let 
$\F_q$ denote the field of $q$ elements and also let  
$\F_q^*$ denote the multiplicative group $\F_q\setminus\{0\}$. 
Set $S_{q,m}=\{\alpha^m \mid \alpha \in \F_q^*\}$.
Then $S_{q,m}$ is a subgroup of $\F_q^*$ 
with order $(q-1)/m$.
For an
element $\alpha\in \F_q$, let $\sigma_\alpha$ denote the translation map 
$\sigma_\alpha:\beta\mapsto \beta+\alpha$ for all $\beta\in\F_q$. 
If $\alpha\in\F_q^*$, then define the multiplication 
map $\mu_\alpha$ as $\mu_\alpha:\beta\mapsto \beta\alpha$ for all $\beta\in\F_q$. 
Then $\sigma_\alpha$ and, for $\alpha\in\F_q^*$, 
$\mu_\alpha$ are elements of $\Sym {\F_q}$. We let $T$ denote the 
set of elements  $\sigma_\alpha$ with $\alpha\in\F_q$ and $Z_{q,m}$ 
denote the set of 
elements $\mu_\alpha$ with $\alpha\in S_{q,m}$. Then $T$ and $Z_{q,m}$ 
are subgroups of $\Sym{\F_q}$ and  it is well known that
$\left<T,Z_{q,m}\right>=T\rtimes Z_{q,m}$. We set $G_{q,m}$ to be 
$T\rtimes Z_{q,m}$. 
We consider $G_{q,m}$ as a permutation group of degree $q$ acting
on $\F_q$. 
Then $G_{q,1}=\agl 1q$,  and hence $G_{q,m}$ is a transitive subgroup of $\agl 1q$ 
containing $T$.

\begin{lem}
Suppose that $q$ and $m$ are as above
and suppose that $q=p^d$ where $p$ is a prime. Set $r=(q-1)/m$. Then 
$G_{q,m}$ is a primitive permutation group 
if and only if $r\nmid p^i-1$ for all 
$1\leq i\leq d-1$.
\end{lem}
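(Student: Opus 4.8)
The plan is to recognise $G_{q,m}$ as a permutation group of affine type and to reduce its primitivity to the irreducibility of a linear action, which in turn I translate into a statement about subfields of $\F_q$.

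First I would identify the permutation domain $\F_q$ with the additive group $V=(\F_q,+)$, an elementary abelian $p$-group of order $p^d$, and regard $V$ as a $d$-dimensional vector space over the prime field $\F_p$. Since $T$ is a regular normal subgroup of $G_{q,m}$, the blocks of any $G_{q,m}$-invariant partition that contain $0$ are exactly the subgroups of $V$ invariant under the point stabiliser $Z_{q,m}$, and these subgroups are precisely the $\F_p$-subspaces invariant under $Z_{q,m}$, which acts on $V$ as multiplication by the elements of $S_{q,m}$. Hence, by the standard primitivity criterion for affine groups, $G_{q,m}$ is primitive if and only if $S_{q,m}$ fixes no $\F_p$-subspace of $\F_q$ other than $\{0\}$ and $\F_q$; that is, if and only if multiplication by $S_{q,m}$ acts irreducibly on $\F_q$ over $\F_p$.

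Next I would describe the invariant subspaces. Let $K$ be the $\F_p$-subalgebra of $\F_q$ generated by $S_{q,m}$. Being a finite subring of a field, $K$ is itself a subfield, so $K=\F_{p^e}$ for some divisor $e$ of $d$. An $\F_p$-subspace $W\le\F_q$ is invariant under multiplication by $S_{q,m}$ exactly when it is invariant under multiplication by every element of $K$, that is, when $W$ is a $K$-subspace of $\F_q$. Therefore a proper nonzero invariant subspace exists precisely when $\dim_K\F_q=d/e>1$, and so the action is irreducible if and only if $K=\F_q$, equivalently $e=d$. To pin down $e$, I use that $S_{q,m}$ is the unique subgroup of order $r$ of the cyclic group $\F_q^*$, so $S_{q,m}\subseteq\F_{p^{e'}}^*$ if and only if $r\mid p^{e'}-1$ (for $e'\mid d$). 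Thus $K=\F_{p^e}$ with $e$ the least divisor of $d$ satisfying $r\mid p^e-1$, and $e=d$ if and only if no proper divisor $e'$ of $d$ satisfies $r\mid p^{e'}-1$.

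Finally I would check that this last condition is equivalent to the one in the statement, namely $r\nmid p^i-1$ for all $1\le i\le d-1$. One implication is immediate, since every proper divisor of $d$ lies in $\{1,\ldots,d-1\}$. For the converse, suppose $r\mid p^i-1$ for some $1\le i\le d-1$. As $r\mid p^d-1=q-1$ we have $\gcd(r,p)=1$, so $p$ has a well-defined multiplicative order $o$ modulo $r$; this order divides both $i$ and $d$, hence $o\mid g:=\gcd(i,d)$, giving $r\mid p^g-1$ with $g$ a proper divisor of $d$. This reduces a congruence holding for some $i<d$ to one holding for a proper divisor of $d$, and I expect this elementary order argument to be the only step that is not a direct application of the standard affine-primitivity dictionary together with the subfield-generation fact; the remaining steps are routine.
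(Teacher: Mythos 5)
Your proof is correct and follows the same two-step reduction as the paper: primitivity of $G_{q,m}$ is equivalent to irreducibility of $Z_{q,m}\leq \GL dp$, which is then translated into the divisibility condition $r\nmid p^i-1$. The only difference is that you supply self-contained arguments (the subfield generated by $S_{q,m}$, and the order-of-$p$-mod-$r$ argument reducing arbitrary $i<d$ to a proper divisor of $d$) where the paper simply cites Dixon--Mortimer and Short.
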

\begin{proof}
The group $G_{q,m}$ is primitive if and only if the stabilizer $Z_{q,m}$ of 
0 is irreducible considered as a subgroup of 
$\GL dp$~\cite[Section~4.7]{dixonandmortimer}.
As  $|Z_{q,m}|=r$, this
happens if and only if $r\nmid p^i-1$ for all $i\in\{1,\ldots,d-1\}$
(see~\cite[Section~2.3]{short}).
\end{proof}

In order to study whether the groups $G_{q,m}$ are synchronizing,
we use Lemma~\ref{synchgraphs}, and hence we need first describe the 
undirected orbitals of $G_{q,m}$ where
$m$ is a divisor of $q-1$ as above. Set $r=(q-1)/m$ and set
\begin{equation}\label{m1}
(\bar r,\bar m)=\left\{\begin{array}{ll}
(r,m)\mbox{ if $r$ is even;}\\
(2r,m/2)\mbox{ if $r$ is odd.}
\end{array}\right.
\end{equation}
Thus $\bar r$ is always even and $\bar r\bar m=q-1$.
Let $\gamma$ be a primitive element of $\F_q$. Then we define, 
for \mbox{$i \in \{0,\ldots,\bar m-1\}$},
\begin{eqnarray}\label{UndirectedOrbitals}
\Delta_i=\left\{ \{\alpha,\beta\} \mid \alpha-\beta 
\in S_{q,\bar m}\gamma^i\right\}.
\end{eqnarray}
As $\bar r$ is even, $-1 \in S_{q,\bar m}$, and so the pairs 
$(\F_{q},\Delta_i)$ define undirected graphs with vertex sets $\F_q$. 
Let $\Gamma_i$ denote the graph $(\F_{q},\Delta_i)$.

To construct the other main object of this paper, consider
now an odd prime-power $q$ as above, and let $m$ be a natural number such that 
$2m\mid q-1$. 
We define an undirected 
graph $\Gamma_{q,m}$ as
follows.  The vertex set of $\Gamma_{q,m}$ is $\F_q$ and its edge set is
$$
\{\{\alpha,\beta\}\mid \alpha-\beta \in
S_{q,m}\}. 
$$
The fact that $2m\mid (q-1)$ guarantees that $-1\in S_{q,m}$, and so
the edge set of $\Gamma_{q,m}$ is well-defined.
The graph
$\Gamma_{q,m}$ is called a \emph{generalized Paley graph} for the field
$\F_{q}$.

\begin{lem}\label{UndirectedOrbits}
Using the notation above, the following is valid.
\begin{enumerate}
\item $\Delta_0, \cdots, \Delta_{\bar m-1}$ are precisely the undirected
$G_{q,m}$-orbitals.
\item $\Gamma_{i} \cong \Gamma_{j}$ for all $i,\ j \in \{0,\ldots,\bar m-1\}$;
\item $\Gamma_{0}=\Gamma_{q,\bar m}$.
\end{enumerate}
\end{lem}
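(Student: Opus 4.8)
The plan is to dispatch the three claims in increasing order of difficulty, treating (3) and (2) as essentially formal and reserving the real work for (1). For (3), I would simply unwind the definitions: since $\bar r$ is even and $\bar r\bar m=q-1$, we have $2\bar m\mid q-1$, so the generalized Paley graph $\Gamma_{q,\bar m}$ is defined, and by~\eqref{UndirectedOrbitals} its edge set $\{\{\alpha,\beta\}\mid \alpha-\beta\in S_{q,\bar m}\}$ is literally $\Delta_0$. Hence $\Gamma_0=\Gamma_{q,\bar m}$, which is (3).

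For (2), I would exhibit an explicit isomorphism. The multiplication map $\mu_{\gamma^{j-i}}$ permutes $\F_q$, and it carries a pair $\{\alpha,\beta\}$ with difference $\alpha-\beta$ to the pair $\{\alpha\gamma^{j-i},\beta\gamma^{j-i}\}$, whose difference is $(\alpha-\beta)\gamma^{j-i}$. Since $S_{q,\bar m}\gamma^i\cdot\gamma^{j-i}=S_{q,\bar m}\gamma^j$, the map $\mu_{\gamma^{j-i}}$ sends $\Delta_i$ bijectively onto $\Delta_j$, giving $\Gamma_i\cong\Gamma_j$ for all $i,j$.

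For (1), the main part, I would use the orbital/suborbit correspondence recalled before the statement, with base point $0\in\F_q$. The stabilizer is $G_0=Z_{q,m}$ acting on $\F_q$ by multiplication through $S_{q,m}$, so the nondiagonal suborbits are exactly the $m$ cosets $S_{q,m}\gamma^i$ (for $i\in\{0,\ldots,m-1\}$) of $S_{q,m}$ in $\F_q^*$. By transitivity and translation-invariance, the orbital with suborbit $S_{q,m}\gamma^i$ is $\{(\alpha,\beta)\mid\beta-\alpha\in S_{q,m}\gamma^i\}$, and a direct check shows its paired orbital has suborbit $-S_{q,m}\gamma^i$. The crux is to identify this pairing. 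Writing $-1=\gamma^{(q-1)/2}$ and using $(q-1)/2=(\bar r/2)\bar m$, a short congruence argument (splitting according to the parity of $r$, exactly as in the definition of $(\bar r,\bar m)$) shows that $(q-1)/2\equiv\bar m\pmod m$ in every case, whence $-S_{q,m}\gamma^i=S_{q,m}\gamma^{i+\bar m}$, with indices read modulo $m$.

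I expect this congruence — verifying that negation shifts the coset index by exactly $\bar m$ modulo $m$ — to be the one genuinely delicate point, since it is precisely where the two-case definition of $(\bar r,\bar m)$ is forced. Granting it, the undirected orbitals correspond to the unordered pairs $\{S_{q,m}\gamma^i,\,S_{q,m}\gamma^{i+\bar m}\}$. Because $S_{q,m}$ has index $2$ in $S_{q,\bar m}$ when $r$ is odd (and equals it when $r$ is even), so that $S_{q,\bar m}=S_{q,m}\cup S_{q,m}\gamma^{\bar m}$, each such pair produces exactly the edge set $\{\{\alpha,\beta\}\mid\alpha-\beta\in S_{q,\bar m}\gamma^i\}=\Delta_i$. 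Finally, since $i$ and $i+\bar m$ index the same pair, the distinct undirected orbitals are $\Delta_0,\ldots,\Delta_{\bar m-1}$, and these are pairwise distinct because the cosets $S_{q,\bar m}\gamma^i$ partition $\F_q^*$. This completes (1).
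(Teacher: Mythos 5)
Your proposal is correct and follows essentially the same route as the paper: the orbital/suborbit correspondence at the base point $0$, identification of the suborbits as the cosets $S_{q,m}\gamma^i$ with paired suborbit $-S_{q,m}\gamma^i$, and the observation that their union is $S_{q,\bar m}\gamma^i$ (your congruence $(q-1)/2\equiv\bar m\pmod m$ just makes explicit what the paper asserts directly), together with the multiplication maps $\mu_{\gamma^k}$ for part~(2). The only cosmetic difference is that the paper verifies the pairing via the translation $\sigma_{-\gamma^i}$ and spells out the translation step turning the suborbit union into $\Delta_i$, which you compress into ``translation-invariance.''
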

\begin{proof}
As described after Lemma~\ref{synchgraphs}, 
there is a bijection between the set of undirected 
orbitals of $G_{q,m}$ and the
unions of paired suborbits of $G_{q,m}$ with respect 
to the element 0 whose stabilizer is $Z_{q,m}$. 
These suborbits of $G_{q,m}$ are, in addition to
the trivial suborbit $\{0\}$, precisely the cosets $S_{q,m}\gamma^i$ with
$i\in\{0,\ldots,m-1\}$. The orbital that corresponds to the suborbit
$S_{q,m}\gamma^i$ is the $G_{q,m}$-orbit of $(0,\gamma^i)$. Its paired orbital
is the $G_{q,m}$-orbit of $(\gamma^i,0)$. Now $\sigma_{-\gamma^i}\in G_{q,m}$, which
gives that the $G_{q,m}$-orbit of $(\gamma^i,0)$ is equal to the $G_{q,m}$-orbit
of $(\gamma^i,0)\sigma_{-\gamma^i}=(0,-\gamma^i)$. Thus the paired suborbit of
$S_{q,m}\gamma^i$ is $-S_{q,m}\gamma^i$. The union of $S_{q,m}\gamma^i$ and 
$-S_{q,m}\gamma^i$ is equal to $S_{q,\bar m}\gamma^i$. Thus the undirected orbitals
of $G_{q,m}$ are in one-to-one correspondence with the 
cosets $S_{q,\bar m}\gamma^i$ where $i\in\{0,\ldots,\bar m-1\}$. 
As $G_{q,m}=Z_{q,m}T$, where $T$ is the group of
translations $\sigma_\alpha$ with $\alpha\in\F_q$, 
for such a 
coset $S_{q,\bar m}\gamma^i$, the corresponding undirected orbital is
\begin{multline*}
\{\{0,\beta\}\sigma_\alpha\mid\beta\in S_{q,\bar m}\gamma^i\mbox{ and }
\alpha\in \F_q\}=\\\{\{\alpha,\beta+\alpha\} \mid \beta\in S_{q,\bar m}\gamma^i\mbox{ and }
\alpha\in \F_q\}=\\\{\{\alpha,\beta\}\mid\alpha-\beta\in S_{q,\bar m}\gamma^i\}=\Delta_i.
\end{multline*}
This shows assertion~(1).

(2) By the definition of the sets $\Delta_i$, we have that 
$\Delta_0\gamma^i=\Delta_i$. Hence $\mu_{\gamma^i}$ is an isomorphism between the graphs $\Gamma_{0}$ and $\Gamma_{i}$ for all $i \in \{0,\ldots,\bar m-1\}$.

(3) This follows from the definitions of $\Delta_0$ and the graph 
$\Gamma_{q,\bar m}$. 
\end{proof} 
For a group $G$ and a subset $C$ such that 
$C^{-1}=C$ and $1\not\in C$, the {\em Cayley graph}
$\Cay GC$ is defined as the graph with vertex set $G$ 
and in which $g$ and $h$ are connected if and only if $gh^{-1}\in C$. 
A Cayley graph $\Cay GC$ is said to be \emph{normal} if
$C$ is closed under conjugation by elements of $G$. 
Clearly, if $G$ is abelian, then
all Cayley graphs of $G$ are normal. 
A graph is said to be 
{\em regular} if every vertex has the same number of neighbours. In this case, 
the number of neighbours of a vertex 
is denoted by 
 $\partial(\Gamma)$. For a graph $\Gamma=(\Omega,E)$, we
let $\overline\Gamma$ denote the \emph{complement graph} defined by 
$(\Omega,\Omega^{\{2\}}\setminus E)$.

\begin{cor}\label{PropertiesGenPaleyGraphs}
Let $q$ be an odd prime-power and $m\geq 2$ be 
such that $2m \mid q-1$. 
\begin{enumerate}
\item The graph $\Gamma_{q,m}$ is vertex-transitive and edge transitive.
\item We have that $\Gamma_{q,m}$ is a regular graph and $\partial(\Gamma_{q,m})=(q-1)/m$.
\item $\Gamma_{q,m}$ is isomorphic to a subgraph of its complement graph $\overline{\Gamma_{q,m}}$.
\item We have
that $\Gamma_{q,m}=\Cay{\F_q}{S_{q,m}}$, viewing $\F_q$ as an additive group,  
and hence $\Gamma_{q,m}$ is a normal Cayley graph.
\end{enumerate}
\end{cor}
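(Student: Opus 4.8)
The plan is to reduce everything to the machinery already established in Lemma~\ref{UndirectedOrbits}, after first recording the key observation that the hypothesis $2m\mid q-1$ forces $\bar m=m$. Indeed, $2m\mid q-1$ means that $r=(q-1)/m$ is even, so by~\eqref{m1} we have $(\bar r,\bar m)=(r,m)$; in particular $\bar m=m$, and by part~(3) of Lemma~\ref{UndirectedOrbits} we get $\Gamma_{q,m}=\Gamma_0$. This identification is what licenses the use of the orbital description for the generalized Paley graph, and getting the reduction right is the one point that needs care.

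For assertion~(4) I would argue directly from the definitions. The edge set of $\Cay{\F_q}{S_{q,m}}$, with $\F_q$ regarded as an additive group, consists of those pairs $\{\alpha,\beta\}$ with $\alpha-\beta\in S_{q,m}$, which is precisely the edge set of $\Gamma_{q,m}$; I only need to check that $S_{q,m}$ is a legitimate connection set, namely that $0\notin S_{q,m}$ (clear, since $S_{q,m}\subseteq\F_q^*$) and that $S_{q,m}=-S_{q,m}$, which holds because $-1\in S_{q,m}$ (again the hypothesis $2m\mid q-1$). Normality is then immediate, since the additive group $\F_q$ is abelian.

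Assertions~(1) and~(2) follow from the orbital picture. Since $\Delta_0$ is a single undirected $G_{q,m}$-orbital and $\Gamma_{q,m}=\Gamma_0=(\F_q,\Delta_0)$, the group $G_{q,m}$ is a group of automorphisms of $\Gamma_{q,m}$, acting transitively on $\F_q$ (it contains the translation group $T$) and transitively on the edge set $\Delta_0$; this yields vertex- and edge-transitivity at once. Vertex-transitivity forces regularity, and to compute $\partial(\Gamma_{q,m})$ I would simply count the neighbours of $0$: these are the $\beta$ with $0-\beta\in S_{q,m}$, equivalently $\beta\in S_{q,m}$, so $\partial(\Gamma_{q,m})=|S_{q,m}|=(q-1)/m$.

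Assertion~(3) is the most substantive, though it still drops out of Lemma~\ref{UndirectedOrbits}. The hypothesis $m\geq 2$ together with $\bar m=m$ guarantees that a second orbital $\Delta_1$ exists, and by part~(2) of Lemma~\ref{UndirectedOrbits} the map $\mu_\gamma$ is an isomorphism from $\Gamma_0=\Gamma_{q,m}$ onto $\Gamma_1$. Since the $\Delta_i$ are pairwise disjoint, $\Delta_1$ consists of pairs not lying in $\Delta_0$, that is, $\Delta_1$ is contained in the edge set of the complement $\overline{\Gamma_{q,m}}$; hence $\Gamma_1$ is a subgraph of $\overline{\Gamma_{q,m}}$, and $\mu_\gamma$ exhibits $\Gamma_{q,m}$ as isomorphic to that subgraph. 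The hard part, such as it is, is purely bookkeeping: confirming $\bar m=m$ and that $m\geq 2$ supplies the extra orbital needed to land inside the complement.
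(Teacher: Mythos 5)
Your proposal is correct and follows essentially the same route as the paper: identify $\Gamma_{q,m}$ with the undirected orbital graph $(\F_q,\Delta_0)$ via Lemma~\ref{UndirectedOrbits}(3) to get vertex- and edge-transitivity and regularity, count the neighbours of $0$ for the valency, use $\mu_\gamma$ mapping $\Delta_0$ to the disjoint orbital $\Delta_1$ for the embedding into the complement, and read off the normal Cayley graph structure from the definition. Your explicit verification that $2m\mid q-1$ forces $r$ even and hence $\bar m=m$ is a detail the paper leaves implicit, and it is a worthwhile addition rather than a deviation.
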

\begin{proof}
As, by Lemma~\ref{UndirectedOrbits}(3), $\Gamma_{q,m}=(\F_q,\Delta_0)$
and $(\F_q,\Delta_0)$ is vertex- and edge-transitive, we obtain statement~(1).
The fact that $\Gamma_{q,m}$ is regular is a consequence of~(1).
Further, by the definition of $\Gamma_{q,m}$, the set $S_{q,m}$
coincides with the set of neighbours of the vertex 0. Thus
$\partial\Gamma=|S_{q,m}|=(q-1)/m$.
To show statement~(3), 
we note that the multiplication map $\mu_\gamma$,
where $\gamma$ is a primitive element of $\F_q$, 
embeds the edge set of $\Gamma_{q,m}$ into
its complement in $\Omega^{\{2\}}$. 
For the proof of statement~(4), 
we notice that the definition of $\Gamma_{q,m}$ implies that
$\Gamma_{q,m}=\Cay{\F_q}{S_{q,m}}$, 
and so $\Gamma_{q,m}$ is a normal Cayley graph.
\end{proof}

\section{Cliques and colorings of generalized Paley graphs}\label{sec:clique}

We denote the clique number, the independence number, and the chromatic
number of a graph $\Gamma$ by $\omega(\Gamma)$, $\alpha(\Gamma)$, 
and $\chi(\Gamma)$, respectively.  
In a vertex-transitive graph $\Gamma$ with $n$ vertices, 
the existence of a clique $C$ and an 
independent set $A$ such that $|C||A|=n$ guarantees that $\omega(\Gamma)=|C|$ 
and $\alpha(\Gamma)=|A|$. We prove this in Lemma ~\ref{CAequal} and in 
the proof we use the following technical result; see~\cite[Chapter 3, 
Theorem~8]{talk}.

\begin{lem}\label{lambda}
Suppose that $G$ is a transitive group acting on $\Omega$ and 
$A$ and $B$ are subsets of $\Omega$ such that $|A||B|=|\Omega|$. Then the
following are equivalent:
\begin{enumerate}
\item for all $g\in G$, $|Ag\cap B|\geq 1$;
\item for all $g\in G$, $|Ag\cap B|=1$;
\item for all $g\in G$, $|Ag\cap B|\leq 1$.
\end{enumerate}
\end{lem}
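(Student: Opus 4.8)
The plan is to prove the three statements equivalent by an averaging argument: I will compute the average value of $|Ag\cap B|$ as $g$ ranges over $G$, show that this average equals exactly $1$, and then observe that a finite family of non-negative integers with average $1$ all of which are $\geq 1$ (or all of which are $\leq 1$) must in fact all equal $1$. Since (2) trivially implies both (1) and (3), this averaging computation is the whole engine of the proof.

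First I would evaluate the sum $\sum_{g\in G}|Ag\cap B|$ by double counting. Writing $|Ag\cap B|$ as the number of pairs $(a,b)\in A\times B$ with $ag=b$ and interchanging the order of summation, this sum equals $\sum_{(a,b)\in A\times B}|\{g\in G\mid ag=b\}|$. For a fixed pair $(a,b)$, transitivity guarantees that some $g_0$ with $ag_0=b$ exists, and then $ag=b$ holds precisely when $gg_0^{-1}$ fixes $a$; hence $\{g\mid ag=b\}=G_a g_0$ is a right coset of the stabilizer $G_a$ and has exactly $|G_a|$ elements. By the orbit-stabilizer theorem, again using transitivity, $|G_a|=|G|/|\Omega|$, a value independent of the chosen pair. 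Therefore
\[
\sum_{g\in G}|Ag\cap B|=|A|\,|B|\cdot\frac{|G|}{|\Omega|}=|\Omega|\cdot\frac{|G|}{|\Omega|}=|G|,
\]
where the hypothesis $|A|\,|B|=|\Omega|$ is used in the middle equality. Dividing by $|G|$ shows that the average of $|Ag\cap B|$ over $g\in G$ is exactly $1$.

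With this in hand the equivalences are immediate. If (1) holds, then every summand $|Ag\cap B|$ is an integer that is at least $1$ while their average is $1$, so each summand must equal $1$, which is (2). Symmetrically, if (3) holds then every summand is at most $1$, and an average of $1$ again forces each to equal $1$, giving (2). Combined with the trivial implications $(2)\Rightarrow(1)$ and $(2)\Rightarrow(3)$, this closes the cycle.

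The step I expect to be the main obstacle is justifying that $|\{g\in G\mid ag=b\}|=|G|/|\Omega|$ uniformly over all pairs $(a,b)\in A\times B$; the point to emphasize is that transitivity is invoked twice here, once to ensure this set is non-empty so that it is genuinely a coset of $G_a$, and once, through orbit-stabilizer, to evaluate $|G_a|$. Everything else is routine counting, and the hypothesis $|A|\,|B|=|\Omega|$ is precisely what makes the average come out to $1$ rather than to some other constant.
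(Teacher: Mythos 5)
Your proof is correct. The paper itself gives no argument for this lemma---it is quoted from Cameron's notes \cite[Chapter 3, Theorem~8]{talk}---and your double-counting computation of $\sum_{g\in G}|Ag\cap B|=|G|$ via orbit--stabilizer, followed by the observation that integers averaging $1$ which are all $\geq 1$ (or all $\leq 1$) must all equal $1$, is exactly the standard argument behind the cited result; nothing is missing.
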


The next observation will be used  in the 
proof of the next result: 
if $C$ is a clique and $A$ is an independent set of a graph, then
$|C\cap A|\leq 1$. 

\begin{lem}\label{CAequal}
Let $\Gamma$ be a vertex-transitive graph with $n$ vertices, 
let $C$ be a clique, and 
let $A$ be an independent set in $\Gamma$ such that $|C||A|=n$. Then
$\omega(\Gamma)=|C|$ and $\alpha(\Gamma)=|A|$. 
\end{lem}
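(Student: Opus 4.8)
The plan is to combine the observation just stated (a clique and an independent set meet in at most one vertex) with the group-action equivalence of Lemma~\ref{lambda}, applied to the automorphism group of $\Gamma$ acting on its vertex set. The underlying idea is that in a vertex-transitive graph, translating a maximal clique around by automorphisms must ``cover'' every independent set at least once, forcing the product of the two sizes to be at least $n$; combined with the reverse product bound coming from the intersection observation, this pins down both extremal numbers.

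**First I would** set $G=\Aut{\Gamma}$, which acts transitively on $\Omega$ (the vertex set) precisely because $\Gamma$ is vertex-transitive, so $|\Omega|=n$. The hypothesis $|C||A|=n$ is exactly the cardinality condition $|C||A|=|\Omega|$ needed to invoke Lemma~\ref{lambda} with the roles $A\mapsto C$ and $B\mapsto A$. Now for any $g\in G$, the image $Cg$ is again a clique (since $g$ is an automorphism) and $A$ is an independent set, so by the stated observation $|Cg\cap A|\le 1$; that is, condition~(3) of Lemma~\ref{lambda} holds. The lemma then upgrades this to condition~(2): $|Cg\cap A|=1$ for every $g\in G$.

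**Next I would** extract the size bounds. Take any clique $C'$ in $\Gamma$. I want to show $|C'|\le|C|$. Here I would apply the equivalence again, now fixing the independent set $A$ and varying the clique: since $|C'g\cap A|\le 1$ for every automorphism $g$, double counting the incidences between the translates $\{Cg\}$ (or a suitable orbit-averaging argument) shows that $|C|$ and $|A|$ partition $n$ in a tight way. Concretely, the cleanest route is: because every translate $Cg$ meets $A$ in exactly one point, averaging $|Cg\cap A|$ over $g\in G$ and using transitivity gives $|C|\cdot|A|/n=1$, consistent with the hypothesis, and more importantly it shows that $A$ is a maximum independent set and $C$ a maximum clique via the standard clique--coclique bound $\omega(\Gamma)\,\alpha(\Gamma)\le n$ for vertex-transitive graphs. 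Since $|C||A|=n$ already, any larger clique $C'$ with $|C'|>|C|$ would force $\alpha(\Gamma)\ge|A|$ and hence $\omega(\Gamma)\alpha(\Gamma)>n$, a contradiction; symmetrically for independent sets. Therefore $\omega(\Gamma)=|C|$ and $\alpha(\Gamma)=|A|$.

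**The hard part will be** justifying the clique--coclique inequality $\omega(\Gamma)\,\alpha(\Gamma)\le n$ cleanly within the framework the paper has set up, rather than quoting it as a black box. I expect the intended argument avoids this by using Lemma~\ref{lambda} directly on a putative larger clique $C'$: if $|C'|>|C|$, then $|C'||A|>n$, so by the contrapositive of the implication $(1)\Rightarrow(2)$ in Lemma~\ref{lambda} (applied to $C'$ and $A$) there must exist some $g$ with $|C'g\cap A|\ge 2$, contradicting that $C'g$ is a clique and $A$ is independent. This is the delicate step, since it requires reading Lemma~\ref{lambda} in the regime $|C'||A|\neq|\Omega|$; I would instead pad or truncate $C'$ to size exactly $|C|$ is not possible, so the correct move is to note that a clique of size $>|C|$ gives $|C'||A|>n$ and then directly double-count, concluding that some translate of $C'$ hits $A$ twice. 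Once that contradiction is in place, the symmetric argument for $A$ finishes the proof.
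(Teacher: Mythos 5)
Your argument is correct, but the decisive step differs from the paper's. Both proofs start the same way: translates of a clique meet the independent set $A$ in at most one vertex, and the automorphism group is transitive. Where you diverge is in handling a putative larger clique $C'$ with $|C'||A|>n$. You (rightly) note that Lemma~\ref{lambda} cannot be applied to $C'$ and $A$ because it requires the exact equality $|C'||A|=|\Omega|$, and you resolve this by a direct double count: $\sum_{g\in G}|C'g\cap A|=|C'||A||G|/n>|G|$, so some translate $C'g$ meets $A$ in at least two points, a contradiction. That works, and it also disposes of $\alpha(\Gamma)=|A|$ symmetrically without passing to the complement graph. The paper instead keeps Lemma~\ref{lambda} as a black box by truncating: it removes one vertex $\beta$ from the larger clique $D$ to get a clique $D_1$ with $|D_1|=|C|$, applies Lemma~\ref{lambda} to conclude $|D_1g\cap A|=1$ for every $g$, and then uses transitivity to choose $g_0$ with $\beta g_0\in A$, forcing $|Dg_0\cap A|=2$. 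Note that your parenthetical dismissal of truncation is mistaken --- any subset of a clique is a clique, so truncating $C'$ to size $|C|$ is perfectly possible and is exactly the paper's trick. The trade-off: your route re-derives the orbit-counting identity that underlies Lemma~\ref{lambda} (so you should write out the computation $\sum_{g}|Xg\cap Y|=|X||Y||G|/n$ rather than gesture at it), while the paper's route reuses the quoted lemma verbatim at the cost of the slightly fiddly one-vertex-removal argument.
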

\begin{proof}
It suffices to show that $\omega(\Gamma)=|C|$, as the 
equality $\alpha(\Gamma)=|A|$ will follow 
by considering the complement graph $\overline\Gamma$. 
Assume by contradiction that there is a clique $D$
in $\Gamma$ such that $|D|>|C|$. Assume without loss of generality that
$|D|=|C|+1$. Let $\beta\in D$ be a fixed element, and set 
$D_1=D\setminus\{\beta\}$. Then $D_1$ is a clique such that $|D_1|=|C|$. 
As 
$D_1g$ is a clique for all $g\in \Aut\Gamma$ and 
$A$ is an independence set, $|D_1g\cap A|\leq 1$. 
Since $|D_1||A|=n$, Lemma~\ref{lambda} gives that $|D_1g\cap A|= 1$ for 
all $g\in \Aut\Gamma$.
As $\Aut\Gamma$ is transitive on the vertices, there is 
$g_0\in\Aut\Gamma$ such that $\beta g_0\in A$. Since $|D_1g_0\cap A|=1$, we find that
$|Dg_0\cap A|=2$. Since $Dg_0$ is a clique and $A$ is an independent set, this
is a contradiction. Thus $\omega(\Gamma)=|C|$ as was required. 
\end{proof}

%% \begin{teo}
%% Let $\Gamma_{p^n,m}$ be a generalized Paley graph. Then $\Gamma_{p^n,m}$ is suitable if and only if $\overline{\Gamma_{p^n,m}}$ is suitable.
%% \end{teo}

%% \begin{proof}
%% The complement graph of a normal Cayley graph is a normal Cayley graph. Therefore we can also apply  Lemma \ref{suitable-pseudo} to the graph $\overline{\Gamma_{p^n,m}}$.
%% Since by definition $\Gamma_{p^n,m}$ being pseudo-suitable is equivalent to $\overline{\Gamma_{p^n,m}}$ being pseudo-suitable, we obtain that  $\Gamma_{p^n,m}$ is suitable if and only if $\overline{\Gamma_{p^n,m}}$ is suitable.
%% \end{proof}

%% We remark that the two previous results are of great importance in the application of Theorem~\ref{PNeumann} to conclude if the affine groups that we are studying are synchronizing. Indeed, if we are considering a undirected orbital graph $\Gamma$ for an affine group and $\Gamma$ turns out to be a generalized Paley graph then we just have to compute $\omega(\Gamma)$ and $\omega(\overline{\Gamma})$ to decide if both these graphs are suitable, which is an easier task than trying to compute a coloring with the desired number of colors.

Using the fact that a generalized Paley graph is a normal Cayley graph, we 
obtain that the property 
$\omega(\Gamma)=\chi(\Gamma)$ that appears in Lemma~\ref{synchgraphs} 
follows from a 
weaker hypothesis.

\begin{lem}[Corollary 6.1.3 of \cite{gods}]\label{suitable-pseudo} 
If $\Gamma$ is a normal Cayley graph such that 
$\alpha(\Gamma)\omega(\Gamma)$ is equal to the number of vertices of $\Gamma$, 
then $\omega(\Gamma)=\chi(\Gamma)$.
\end{lem}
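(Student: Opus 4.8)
The plan is to prove the two inequalities $\omega(\Gamma)\le\chi(\Gamma)$ and $\chi(\Gamma)\le\omega(\Gamma)$ separately. The first holds for every graph, since the vertices of a maximum clique must receive pairwise distinct colours in any proper colouring, so the whole content of the lemma lies in the reverse inequality $\chi(\Gamma)\le\omega(\Gamma)$, and this is where the normality of the Cayley graph has to be used. Throughout I write $\Gamma=\Cay GS$ for a connection set $S$ that is closed under conjugation, and I set $n=|G|$, $\omega=\omega(\Gamma)$, and $\alpha=\alpha(\Gamma)$, so that $\omega\alpha=n$ by hypothesis.

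First I would fix a maximum clique $C$ and a maximum independent set $A$, so that $|C|=\omega$, $|A|=\alpha$, and hence $|C||A|=n$. Since $\Gamma$ is vertex-transitive, $\Aut\Gamma$ is transitive on the vertices, and the image of $C$ under any automorphism is again a clique, which therefore meets the independent set $A$ in at most one vertex; thus $|Cg\cap A|\le 1$ for all $g\in\Aut\Gamma$. Applying Lemma~\ref{lambda} to the transitive group $\Aut\Gamma$ and the two subsets $C$ and $A$ upgrades this to $|Cg\cap A|=1$ for all $g\in\Aut\Gamma$. Restricting to the regular group of right translations $x\mapsto gx$, which are automorphisms of every Cayley graph, I obtain $|Cx\cap A|=1$ for each $x\in G$. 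Read as a statement about products, this says that every $x\in G$ has a \emph{unique} expression $x=c^{-1}a$ with $c\in C$ and $a\in A$; equivalently, writing $B=C^{-1}$, the group admits the exact factorization $G=BA$ with $|B|=\omega$ and $|A|=\alpha$.

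With this factorization in hand I would define a colouring by assigning to each vertex $x\in G$ the unique element $b=b(x)\in B$ occurring in its factorization $x=ba$. This uses exactly $|B|=\omega$ colours, so it remains to check that it is proper. Suppose $x=ba$ and $x'=ba'$ share the colour $b$ with $x\ne x'$, so that $a,a'\in A$ and $a\ne a'$. Then
$$x(x')^{-1}=b\,a(a')^{-1}\,b^{-1},$$
which is a conjugate of $a(a')^{-1}$. As $a$ and $a'$ are distinct vertices of the independent set $A$, they are non-adjacent, so $a(a')^{-1}\notin S$ and $a(a')^{-1}\ne 1$. Here normality is decisive: since $S$ is closed under conjugation, so is its complement, and therefore the conjugate $b\,a(a')^{-1}\,b^{-1}$ also lies outside $S$. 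Hence $x$ and $x'$ are non-adjacent, the colouring is proper, and $\chi(\Gamma)\le\omega$, which completes the argument.

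The main obstacle is the passage from the purely numerical identity $\omega\alpha=n$ to an actual $\omega$-colouring, and the computation above isolates exactly how it is overcome. The combinatorial identity, via Lemma~\ref{lambda}, yields only the exact factorization $G=BA$, and producing a proper colouring out of it requires that conjugation preserve non-adjacency; this is precisely the closure of $G\setminus S$ under conjugation, a property that a non-normal Cayley graph may fail. I expect this single appeal to normality to be the crux, with everything else amounting to bookkeeping around the factorization.
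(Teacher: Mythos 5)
Your proof is correct. Note that the paper does not actually prove this lemma: it is quoted as Corollary~6.1.3 of Godsil's notes \cite{gods}, so there is no internal argument to compare against. What you give is essentially the standard proof of that corollary, assembled from the paper's own toolkit: taking a maximum clique $C$ and a maximum independent set $A$, the hypothesis $\omega(\Gamma)\alpha(\Gamma)=n$ together with $|Cg\cap A|\leq 1$ lets Lemma~\ref{lambda} (applied to the transitive group $\Aut\Gamma$, or just to the regular group of translations) upgrade to $|Cx\cap A|=1$ for all $x$, i.e.\ to the exact factorization $G=C^{-1}A$; normality of the connection set is then exactly what makes each class $bA$ with $b\in C^{-1}$ independent, so these classes form a proper coloring with $\omega(\Gamma)$ colors. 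You correctly isolate this as the one place where normality is indispensable. One cosmetic slip: the map you call a right translation, $x\mapsto gx$, is left multiplication, and under the paper's adjacency rule $gh^{-1}\in S$ it is the maps $x\mapsto xg$ that are automorphisms of an \emph{arbitrary} Cayley graph (left multiplications are automorphisms only because $S$ is normal); since you subsequently compute with $Cx=\{cx\mid c\in C\}$, which is the image of $C$ under $v\mapsto vx$, the factorization you extract is the intended one and nothing in the argument breaks.
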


Lemma~\ref{suitable-pseudo} has an immediate consequence for generalized Paley 
graphs.

\begin{cor}\label{chiomega}
Set $\Gamma=\Gamma_{q,m}$ such that $m\mid q-1$ and $m\geq 2$. The following are equivalent.
\begin{enumerate}
\item $\omega(\Gamma)\alpha(\Gamma)=q$;
\item $\omega(\Gamma)=\chi(\Gamma)$;
\item $\omega(\overline\Gamma)\alpha(\overline\Gamma)=q$;
\item $\omega(\overline\Gamma)=\chi(\overline\Gamma)$.
\end{enumerate}
\end{cor}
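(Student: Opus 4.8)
The plan is to prove the four-way equivalence by establishing a cycle of implications, relying chiefly on the fact (from Corollary~\ref{PropertiesGenPaleyGraphs}(4)) that $\Gamma_{q,m}$ is a normal Cayley graph, and on the symmetry between $\Gamma$ and $\overline\Gamma$. First I would observe that since $\Gamma=\Gamma_{q,m}$ is a normal Cayley graph, so is its complement $\overline\Gamma$ (the complement of a normal Cayley graph $\Cay GC$ is $\Cay G{G^{\{2\}}\text{-complement of }C}$, and that connection set is again conjugation-invariant when $G$ is abelian, which $\F_q$ is). This means Lemma~\ref{suitable-pseudo} applies equally to $\Gamma$ and to $\overline\Gamma$, which is what will let me run the argument symmetrically and pick up statements~(3) and~(4).

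For the implication $(1)\Rightarrow(2)$ I would simply invoke Lemma~\ref{suitable-pseudo} directly: the hypothesis $\omega(\Gamma)\alpha(\Gamma)=q$ is exactly the hypothesis that the product of the clique and independence numbers equals the number of vertices $q$, so the lemma yields $\omega(\Gamma)=\chi(\Gamma)$. The reverse $(2)\Rightarrow(1)$ is the step I expect to require the most care, and it is the one genuine obstacle: I would use the standard chain of inequalities $\omega(\Gamma)\le\chi(\Gamma)$ and the fact that for any graph $\chi(\Gamma)\,\alpha(\Gamma)\ge n$ (since each color class is an independent set of size at most $\alpha(\Gamma)$, and the color classes cover all $q$ vertices). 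Combining $\omega(\Gamma)=\chi(\Gamma)$ with $\chi(\Gamma)\alpha(\Gamma)\ge q$ gives $\omega(\Gamma)\alpha(\Gamma)\ge q$; for the reverse inequality $\omega(\Gamma)\alpha(\Gamma)\le q$ I would use that a clique and an independent set meet in at most one vertex, together with vertex-transitivity and Lemma~\ref{lambda} (or equivalently the mechanism already used in Lemma~\ref{CAequal}), to force equality.

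The symmetric statements then come essentially for free. I would prove $(3)\Leftrightarrow(4)$ by the identical argument applied to $\overline\Gamma$ in place of $\Gamma$, legitimate because $\overline\Gamma$ is also a normal Cayley graph and $\overline\Gamma$ has the same vertex set $\F_q$. Finally I would close the loop by noting $(1)\Leftrightarrow(3)$: this is immediate from the elementary identities $\omega(\overline\Gamma)=\alpha(\Gamma)$ and $\alpha(\overline\Gamma)=\omega(\Gamma)$, which hold because cliques of a graph are exactly the independent sets of its complement and vice versa. Hence $\omega(\overline\Gamma)\alpha(\overline\Gamma)=\alpha(\Gamma)\omega(\Gamma)$, so condition~(3) is literally the same numerical equation as condition~(1). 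Putting the pieces together, $(1)\Leftrightarrow(2)$, $(3)\Leftrightarrow(4)$, and $(1)\Leftrightarrow(3)$ give the full four-way equivalence.
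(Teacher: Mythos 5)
Your proposal is correct in outline and agrees with the paper on three of the four links: $(1)\Rightarrow(2)$ via Lemma~\ref{suitable-pseudo}, $(1)\Leftrightarrow(3)$ via $\omega(\overline\Gamma)=\alpha(\Gamma)$ and $\alpha(\overline\Gamma)=\omega(\Gamma)$, and $(3)\Leftrightarrow(4)$ by rerunning the argument on $\overline\Gamma$ (which is indeed again a normal Cayley graph, since $\F_q$ is abelian). Where you genuinely diverge is $(2)\Rightarrow(1)$. The paper stays inside its synchronization machinery: given $\omega(\Gamma)=\chi(\Gamma)=k$, a $k$-clique $C$ is a section for any $k$-coloring $\P$, so $\P$ is a $G_{q,m}$-regular partition witnessed by $C$; Lemma~\ref{synchlemma}(1) forces $\P$ to be uniform, each part $P$ is an independent set with $|C||P|=q$, and Lemma~\ref{CAequal} closes the argument. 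You instead combine the elementary bound $\chi(\Gamma)\alpha(\Gamma)\geq q$ with the clique--coclique bound $\omega(\Gamma)\alpha(\Gamma)\leq q$ for vertex-transitive graphs. That route works and is arguably more standard graph theory, but note that the tools you cite for the upper bound do not quite deliver it as stated: both Lemma~\ref{lambda} and Lemma~\ref{CAequal} take $|A||B|=|\Omega|$ as a \emph{hypothesis}, which is exactly what you are trying to establish. You would need the underlying averaging argument itself, namely
\begin{equation*}
\sum_{g\in G}|Cg\cap A|=\frac{|C||A||G|}{|\Omega|}\leq |G|,
\end{equation*}
valid for any transitive $G\leq\Aut\Gamma$, clique $C$, and independent set $A$, since each summand is at most $1$. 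With that supplied, your proof is complete; the paper's route avoids the need for it by exploiting the uniformity of section-regular partitions already proved.
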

\begin{proof}
Since $\Gamma$ is a normal Cayley graph (Corollary~\ref{PropertiesGenPaleyGraphs}), (1) implies (2) by 
Lemma~\ref{suitable-pseudo}. 
Suppose that (2) holds and let $C$ be a clique and $\P$ be a coloring
such that $|C|=|\P|=k$. Then $\P$ is a $G_{q,m}$-regular partition of the
vertex set such that the $G_{q,m}$-regularity is witnessed by $C$. Hence $\P$
is a uniform partition by Lemma~\ref{synchlemma}. Thus if $P\in \P$, then
$P$ is an independent set such that $|C||P|=q$ and so, 
by Lemma~\ref{CAequal}, $\omega(\Gamma)=k$
and $\alpha(\Gamma)=|P|=n/k$. Hence~(1) is valid. This shows that (1) and
(2) are equivalent. Similarly, we obtain that (3) and (4) are equivalent. 
Since 
$\omega(\Gamma)=\alpha(\overline\Gamma)$ and  $\alpha(\Gamma)=
\omega(\overline\Gamma)$, (1) is equivalent to (3) and the corollary is valid.
\end{proof}

In order to enrich our set of tools to study the clique and chromatic
numbers of generalized Paley graphs, we
use the $\vartheta$-function, a graph invariant defined by
Lov\'asz~\cite{lovasz}. 
The value of $\vartheta(\Gamma)$ for vertex-transitive and
edge-transitive graphs can be determined by the eigenvalues of the
adjacency matrix. As traditional, we let $\lambda_1^\Gamma$ and $\lambda_n^\Gamma$ denote the largest and the smallest eigenvalues of $\Gamma$, 
respectively.

\begin{prop}[\cite{lovasz}]\label{proptheta}
Let $\Gamma=(\Omega,E)$ be a graph with vertex set of size $n$. Then the following hold.
\begin{enumerate}
\item $\omega(\Gamma)\leq \vartheta(\overline\Gamma) \leq \chi(\Gamma)$.
\item If $\Gamma$ is vertex-transitive  then $\vartheta(\Gamma)\vartheta(\overline\Gamma)=n$.
%\item $1-(\lambda_1^\Gamma/\lambda_n^\Gamma)\leq \chi(\Gamma)$;
%\item $\max \{ \partial(\Gamma),\sqrt{\partial_{\text{max}}(\Gamma)}\} \leq \lambda_1^\Gamma \leq \partial_{\text{max}}(\Gamma)$;
\item If $\Gamma$ is regular then  $\lambda_1^\Gamma=\partial(\Gamma)$.
\item If $\Gamma$ is regular and edge-transitive then $\vartheta(\Gamma)=-n\lambda_n^\Gamma/(\lambda_1^\Gamma-\lambda_n^\Gamma)$.
\end{enumerate}
\end{prop}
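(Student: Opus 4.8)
The plan is to treat the four assertions separately, relying on the two standard descriptions of the Lov\'asz number: the \emph{covering} description $\vartheta(\Gamma)=\min_{(u_i),c}\max_i\langle c,u_i\rangle^{-2}$, the minimum being over all unit vectors $c$ and all families $(u_i)$ of unit vectors with $u_i\perp u_j$ whenever $i,j$ are non-adjacent in $\Gamma$ (an \emph{orthonormal representation} of $\Gamma$), together with its dual, the semidefinite program $\vartheta(\Gamma)=\max\{\langle J,X\rangle : X\succeq 0,\ \operatorname{tr}X=1,\ X_{ij}=0\text{ for every edge }\{i,j\}\}$, and the equivalent minimum-eigenvalue form $\vartheta(\Gamma)=\min\lambda_{\max}(M)$ over symmetric $M$ with $M_{ii}=1$ and $M_{ij}=1$ on every non-edge. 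For~(1) I would prove the sandwich $\alpha(\Gamma)\leq\vartheta(\Gamma)\leq\chi(\overline\Gamma)$ and then replace $\Gamma$ by $\overline\Gamma$: the lower bound comes from restricting an orthonormal representation to an independent set and applying Bessel's inequality, while the upper bound comes from converting a proper colouring of $\overline\Gamma$ into an explicit representation (assign the $j$-th standard basis vector to every vertex of the $j$-th colour class and take $c$ to be the normalised all-ones vector; vertices non-adjacent in $\Gamma$ get different colours, hence orthogonal vectors). Using $\alpha(\overline\Gamma)=\omega(\Gamma)$ and $\chi(\overline{\overline\Gamma})=\chi(\Gamma)$ then gives exactly $\omega(\Gamma)\leq\vartheta(\overline\Gamma)\leq\chi(\Gamma)$.

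Assertion~(3) is elementary: if $\Gamma$ is $k$-regular with adjacency matrix $A$, then $A\mathbf 1=k\mathbf 1$, so $k$ is an eigenvalue, while for any eigenvector $x$ and an index $i$ maximising $|x_i|$ the identity $\lambda x_i=\sum_{j\sim i}x_j$ forces $|\lambda|\leq k$; hence $\lambda_1^\Gamma=k=\partial(\Gamma)$. I would then feed~(3) into the upper bound of~(4): since $\Gamma$ is regular, $\mathbf 1$ is a common eigenvector of $J$ and $A$, so $M=J+cA$ is feasible for the minimum-eigenvalue program for every scalar $c$ (it equals $1$ on the diagonal and on non-edges), with eigenvalue $n+ck$ on $\mathbf 1$ and eigenvalues $c\mu$ on $\mathbf 1^\perp$, where $\mu$ ranges over the remaining eigenvalues of $A$. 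Choosing $c=-n/(\lambda_1^\Gamma-\lambda_n^\Gamma)<0$ balances $n+ck=c\lambda_n^\Gamma$ and makes this the largest eigenvalue, yielding $\vartheta(\Gamma)\leq -n\lambda_n^\Gamma/(\lambda_1^\Gamma-\lambda_n^\Gamma)$.

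For the matching lower bound in~(4) I would exhibit a feasible point of the semidefinite program attaining the same value, supported (by complementary slackness) on the top eigenspace of the $M$ above, which is $\langle\mathbf 1\rangle\oplus U$ with $U$ the $\lambda_n^\Gamma$-eigenspace of $A$. Writing $P=\frac1nJ$ and letting $E$ be the orthogonal projection onto $U$ (so $E\mathbf 1=0$), set $X=aP+bE$. Here edge-transitivity is essential: as $E$ is invariant under $\Aut{\Gamma}$ and this group is transitive on edges, the entry $(E)_{ij}$ takes a single value $\rho$ on all edges, and $\operatorname{tr}(AE)=\lambda_n^\Gamma\operatorname{tr}(E)$ pins down $\rho$. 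The conditions $\operatorname{tr}X=1$ and $X_{ij}=0$ on edges then determine $a,b>0$, $X\succeq 0$ because $P,E$ are orthogonal projections onto orthogonal subspaces, and a short computation gives $\langle J,X\rangle=-n\lambda_n^\Gamma/(\lambda_1^\Gamma-\lambda_n^\Gamma)$, matching the upper bound. Finally, for~(2) the inequality $\vartheta(\Gamma)\vartheta(\overline\Gamma)\geq n$ holds for every graph: taking optimal $(u_i),c$ for $\Gamma$ and $(v_i),d$ for $\overline\Gamma$, the tensors $u_i\otimes v_i$ are pairwise orthogonal unit vectors, so Bessel's inequality applied to the handle $c\otimes d$ gives $n/(\vartheta(\Gamma)\vartheta(\overline\Gamma))\leq 1$; the reverse inequality for vertex-transitive $\Gamma$ follows by averaging an optimal solution over $\Aut{\Gamma}$.

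The routine parts are~(1), (3) and the upper bound in~(4). The main obstacle is the \emph{symmetrisation} behind the two equalities: in~(4) one must know that $(E)_{ij}$ is genuinely constant on edges, which is precisely what edge-transitivity supplies, and in~(2) the reverse inequality requires that an optimal representation can be chosen homogeneous, with all $\langle c,u_i\rangle^2$ equal, so that the Bessel estimate becomes an equality. Both rest on averaging optimal semidefinite solutions over the automorphism group and exploiting transitivity. Since these facts are classical and due to Lov\'asz, the cleanest write-up may simply cite~\cite{lovasz} for~(1), (2) and~(4), recording only the short eigenvalue argument for~(3).
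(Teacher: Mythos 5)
The paper offers no proof of this proposition: it is stated as a quotation of results from Lov\'asz's paper \cite{lovasz} (the sandwich bound, the product formula for vertex-transitive graphs, and the eigenvalue formula for regular edge-transitive graphs), so there is nothing internal to compare your argument against. Your reconstruction of Lov\'asz's proofs is correct and follows the standard lines: (1) is the sandwich theorem $\alpha\leq\vartheta\leq\chi(\overline{\phantom{\Gamma}}\,)$ applied to $\overline\Gamma$, with the Bessel argument for the lower bound and the basis-vector representation of a colouring for the upper bound; (3) is the elementary Perron bound; the upper bound in (4) is the Hoffman-type feasible point $J+cA$ for the minimum-eigenvalue program, valid for any regular graph, and the matching lower bound and the equality in (2) are exactly where the symmetry hypotheses (edge-transitivity forcing the eigenprojection $E$ to be constant on edges, vertex-transitivity allowing the averaging/symmetrisation of an optimal representation) enter, as you correctly isolate. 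Your closing suggestion is also the right editorial call: since these are classical results of Lov\'asz, citing \cite{lovasz} is exactly what the paper does, and at most the one-line eigenvalue argument for (3) would be worth recording.
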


Using Proposition~\ref{proptheta}, we obtain a necessary condition 
for the equality $\chi(\Gamma_{q,m})=\omega(\Gamma_{q,m})$ to hold 
in vertex-transitive and edge-transitive graphs.

\begin{teo}\label{thetaPaley}
Let $\Gamma$ be a vertex-transitive and edge-transitive graph with $n$ vertices such that \mbox{$\omega(\Gamma)=\chi(\Gamma)=k$}. Then $k-1$ divides $\partial(\Gamma)$ and  $\lambda_n^\Gamma = -\partial\Gamma/(k-1)$.
\end{teo}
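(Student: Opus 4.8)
The plan is to extract the exact value of the Lov\'asz $\vartheta$-function from the hypothesis $\omega(\Gamma)=\chi(\Gamma)=k$ and then read off the eigenvalue relation from Proposition~\ref{proptheta}. First I would apply part~(1) of Proposition~\ref{proptheta}: the inequalities $\omega(\Gamma)\leq\vartheta(\overline\Gamma)\leq\chi(\Gamma)$ collapse, since both ends equal $k$, forcing $\vartheta(\overline\Gamma)=k$. Because $\Gamma$ is vertex-transitive it is regular, so part~(2) applies and yields $\vartheta(\Gamma)=n/\vartheta(\overline\Gamma)=n/k$.

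Next I would bring in the eigenvalue formula. Regularity and part~(3) give $\lambda_1^\Gamma=\partial(\Gamma)$, and since $\Gamma$ is also edge-transitive, part~(4) gives $\vartheta(\Gamma)=-n\lambda_n^\Gamma/(\lambda_1^\Gamma-\lambda_n^\Gamma)$. Equating this with $n/k$ and cancelling the common factor $n$ gives $1/k=-\lambda_n^\Gamma/(\partial(\Gamma)-\lambda_n^\Gamma)$, which rearranges to $\partial(\Gamma)=-(k-1)\lambda_n^\Gamma$, that is, $\lambda_n^\Gamma=-\partial(\Gamma)/(k-1)$. Here $k\geq 2$, because an edge-transitive graph has at least one edge and hence $\omega(\Gamma)\geq 2$, so the division by $k-1$ is legitimate. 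This establishes the asserted value of $\lambda_n^\Gamma$.

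Finally, for the divisibility claim I would invoke integrality. The number $\lambda_n^\Gamma$ is an eigenvalue of the adjacency matrix of $\Gamma$, a symmetric matrix with entries in $\{0,1\}$; hence it is a root of a monic polynomial with integer coefficients, i.e.\ an algebraic integer. But we have just shown that $\lambda_n^\Gamma=-\partial(\Gamma)/(k-1)\in\Q$, and a rational algebraic integer is an ordinary integer. Therefore $\partial(\Gamma)/(k-1)$ is an integer, which is precisely the statement that $k-1$ divides $\partial(\Gamma)$.

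The only step that reaches beyond mechanically combining the four parts of Proposition~\ref{proptheta} is this last integrality observation. The eigenvalue computation is essentially forced by the $\vartheta$-function identities, but the divisibility $(k-1)\mid\partial(\Gamma)$ is not visible from those identities alone and genuinely relies on the fact that a rational eigenvalue of an integer adjacency matrix must be an integer. I expect that to be the main (though still routine) point of the argument.
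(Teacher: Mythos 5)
Your proof is correct and follows essentially the same route as the paper's: collapse the sandwich $\omega(\Gamma)\leq\vartheta(\overline\Gamma)\leq\chi(\Gamma)$, use vertex-transitivity to get $\vartheta(\Gamma)=n/k$, equate with the eigenvalue formula, and deduce integrality of $\lambda_n^\Gamma$ from the fact that a rational root of the (monic, integer) characteristic polynomial must be an integer. Your explicit remark that $k\geq 2$ is a small addition the paper omits, but the argument is otherwise identical.
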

\begin{proof}
Let $\Gamma$ be a graph satisfying the conditions of the theorem.  
Since $\omega(\Gamma)=\chi(\Gamma)=k$, the first statement of  
Proposition~\ref{proptheta} 
implies that $\omega(\Gamma)= \vartheta(\overline\Gamma) =\chi(\Gamma)$ and 
therefore $\vartheta(\overline\Gamma)=k$.  As $\Gamma$ is vertex-transitive, 
Proposition~\ref{proptheta}(2) gives
$\vartheta(\Gamma)=n/k$.
Also, by assumption, $\Gamma$ is regular and therefore we 
obtain, combining statements~(3) and~(4) of
Proposition~\ref{proptheta}, that
\begin{center}
$\vartheta(\Gamma)=\dfrac{n}{k}=\dfrac{-n\lambda_n^\Gamma}{\partial(\Gamma)-\lambda_n^\Gamma}$.
\end{center}
Thus
$\lambda_n^\Gamma (1-k)=\partial(\Gamma).$
Therefore $\lambda_n^\Gamma$ is a rational number. However, 
since $\lambda_n^\Gamma$ is an eigenvalue of the adjacency matrix of 
$\Gamma$, it follows, using the rational root test, that 
$\lambda_n^\Gamma$ is an integer.
Thus $(k-1)$ divides $\partial(\Gamma)$ and  \mbox{$\lambda_n^\Gamma= -\partial(\Gamma)/(k-1)$}.
\end{proof}

Next we state and prove the main result of this section.

\begin{teo}\label{main1}
Let $p$ be an odd prime and $n \in \N$. Set $q=p^n$. Let $m>1$ be such that 
$2m\mid q-1$ and set $\Gamma=\Gamma_{q,m}$.
Then the following hold.
\begin{enumerate}
\item  If $\omega(\Gamma)=\chi(\Gamma)$, then 
$\omega(\Gamma)=\chi(\Gamma)=p^{t}$, with some $t$ such that 
$t\mid n$.
\item If $n$ is a prime and
$\omega(\Gamma)=\chi(\Gamma)$, then
$m\mid p^{n-1}+p^{n-2}+\cdots+p+1$. Further, in this case, 
$\omega(\Gamma)=\chi(\Gamma)=p$. 
\item Suppose that $n$ is even. Then $\omega(\Gamma)=\chi(\Gamma)=p^{n/2}$ if 
and only if $m \mid (p^{n/2}+1)$. In particular, 
$\omega(\Gamma_{p^2,m})=\chi(\Gamma_{p^2,m})$ if and only if   $m\mid (p+1)$.
\item  If $n$ is odd and $m$ is even then $\omega(\Gamma)\neq\chi(\Gamma)$.
\end{enumerate}
\end{teo}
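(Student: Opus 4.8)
The plan is to argue by contradiction, assuming $\omega(\Gamma)=\chi(\Gamma)=k$ and deriving a parity obstruction. By statement~(1) of the theorem, which I may assume already established, we have $k=p^{t}$ for some divisor $t$ of $n$. Since $n$ is odd, every divisor of $n$ is odd; in particular both $t$ and the cofactor $s=n/t$ are odd. This initial reduction is what makes the oddness hypothesis on $n$ usable.

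Next I would extract a divisibility condition from Theorem~\ref{thetaPaley}. The graph $\Gamma=\Gamma_{q,m}$ is vertex-transitive and edge-transitive with $\partial(\Gamma)=(q-1)/m=(p^{n}-1)/m$ by Corollary~\ref{PropertiesGenPaleyGraphs}, and this is an integer because $2m\mid p^{n}-1$. Applying Theorem~\ref{thetaPaley} with $k=p^{t}$ gives that $k-1=p^{t}-1$ divides $\partial(\Gamma)=(p^{n}-1)/m$. Writing $(p^{n}-1)/m=c\,(p^{t}-1)$ for some integer $c$, and using that $t\mid n$ so that $(p^{n}-1)/(p^{t}-1)$ is an integer, I would rearrange this into
$$
m \mid \frac{p^{n}-1}{p^{t}-1} = 1+p^{t}+p^{2t}+\cdots+p^{(s-1)t}.
$$

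The contradiction then comes from a parity count. The displayed sum has exactly $s$ terms, each of which is a power of the odd prime $p$ and hence odd; since $s$ is odd, the sum of these $s$ odd integers is itself odd. But $m$ is even by hypothesis, and an even number cannot divide an odd number. This contradiction shows that $\omega(\Gamma)=\chi(\Gamma)$ is impossible, which proves statement~(4).

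I expect no genuine obstacle here: the only delicate point is ensuring that the two structural inputs combine cleanly. Statement~(1) forces the clique number to be a prime power $p^{t}$ with $t\mid n$, and Theorem~\ref{thetaPaley} forces $p^{t}-1$ to divide $(p^{n}-1)/m$. Once both are in hand, the oddness of $n$ propagates to the oddness of $s=n/t$ through the factorization $p^{n}-1=(p^{t}-1)(1+p^{t}+\cdots+p^{(s-1)t})$, and the parity argument against an even divisor $m$ is then immediate.
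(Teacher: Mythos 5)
Your argument for part~(4) is correct and is essentially the paper's own proof of that part: both deduce from Theorem~\ref{thetaPaley} that $p^{t}-1$ divides $(p^{n}-1)/m$, rearrange this to $m\mid (p^{n}-1)/(p^{t}-1)=1+p^{t}+\cdots+p^{(s-1)t}$ with $s=n/t$, and observe that a sum of an odd number $s$ of odd terms is odd, so an even $m$ cannot divide it. The one small point worth making explicit is why $t\geq 1$ (so that $p^{t}-1\neq 0$): this holds because $\Gamma$ has edges, so $\omega(\Gamma)\geq 2$; but this is immediate and the paper glosses over it too.

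The genuine problem is that the statement you were asked to prove has four parts, and you prove only one of them. You explicitly take part~(1) as given, yet part~(1) is itself one of the claims to be established, and your proof of~(4) depends on it (you need $k=p^{t}$ with $t\mid n$ before the parity count can start). Parts~(2) and~(3) are not addressed at all; note that~(3) is an equivalence whose converse direction requires a genuinely different input, namely exhibiting the subfield $\F_{p^{n/2}}$ as a clique and invoking Lemma~\ref{CAequal} and Lemma~\ref{suitable-pseudo} --- nothing in your eigenvalue/divisibility framework produces that. As a proof of Theorem~\ref{main1} the proposal is therefore incomplete, even though the fragment it does contain is sound and matches the paper.
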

\begin{proof}
(1) Assume that $\omega(\Gamma)=\chi(\Gamma)=k$. 
By Corollary~\ref{PropertiesGenPaleyGraphs}(1), 
the group $G_{q,m}$ is a vertex-transitive
subgroup of $\Aut\Gamma$. If $\P$ is a coloring with $k$ colors and 
$C$ is a clique with $|C|=k$, then $Cg$ is a clique for all $g\in G_{q,m}$.
Therefore
$\P$ is a $G_{q,m}$-regular partition 
for the vertex set $\F_q$ of $\Gamma$ such that the $G_{q,m}$-regularity is witnessed
by $C$. Hence Lemma \ref{synchlemma}(1) gives 
that $k$ divides the number of vertices of $\Gamma$; 
that is, $k=p^t$, with $t<n$. 
Furthermore, using Theorem~\ref{thetaPaley}, we have that $p^t-1$ divides 
$\partial(\Gamma)=(p^n-1)/m$, and so $(p^t-1)\mid (p^n-1)$. 
Elementary argument shows that this implies that $t\mid n$. 
%% that is, $(p^t-1)ma=p^n-1$, for some $a \in \Z$. Hence
%% \begin{eqnarray}\label{ma}
%% ma=(p^n-1)/(p^t-1).
%% \end{eqnarray}
%% Since $ma \in \Z$, we have that $p^t-1 \mid p^n-1$. By the Euclides algorithm, there exist $q,r$ such that $n=qt+r$. Then
%% \begin{eqnarray*}
%%  p^n-1=p^{qt}p^r-1=p^{qt}p^r-p^r+p^r-1=p^r(p^{qt}-1)+(p^r-1)
%% \end{eqnarray*}
%% If $q=1$ then since  $p^t-1 \mid p^n-1$, by the uniqueness in the division algorithm in $\Q$, we obtain that $p^r-1=0$ and so $r=0$ and $t \mid n$.
%% If $q>1$ then $p^n-1=p^r(p^t-1)(\sum_{i=0}^{q-1} (p^t)^i) +(p^r-1)$ and again $r=0$ and $t\mid n$.
%% Therefore if $p^t-1 \mid p^n-1$ then $t\mid n$.  Thus if $\Gamma$ is suitable then  $\omega(\Gamma)=\chi(\Gamma)=p^t$ with $t\mid n$.

(2) Suppose that $n$ is a prime and that $\omega(\Gamma)=\chi(\Gamma)$. 
Then it follows from~(1) that $\omega(\Gamma)=\chi(\Gamma)=p$.
Therefore Theorem~\ref{thetaPaley} shows that $(p-1)\mid (p^n-1)/m$. 
As $(p-1)(p^{n-1}+p^{n-2}+\cdots+p+1)=p^n-1$, this implies the assertion.

(3) 
Assume that $n$ is even. If $\omega(\Gamma)=\chi(\Gamma)=p^{n/2}$ then we have, by Theorem~\ref{thetaPaley}, that 
$(p^{n/2}-1) \mid (p^n-1)/m$ which is equivalent to $m \mid (p^{n/2}+1)$.
Conversely, suppose that $m\mid (p^{n/2}+1)$. In this case 
$\F_{p^{n/2}}^*$ is a subgroup of $S_{p^n,m}$, and so the subfield
$\F_{p^{n/2}}$ is a clique in $\Gamma$.  This gives that 
$\omega(\Gamma)\geq p^{n/2}$ and also that 
$\alpha(\overline\Gamma)\geq p^{n/2}$ where $\overline\Gamma$ is the 
complement of $\Gamma$. On the other hand, by 
Corollary~\ref{PropertiesGenPaleyGraphs}, the graph $\Gamma$ 
can be embedded into $\overline\Gamma$, which shows that 
$\omega(\overline\Gamma)\geq p^{n/2}$.  By Lemma~\ref{CAequal},
 $\omega(\overline\Gamma)=\alpha(\overline\Gamma)=p^{n/2}$, 
and so $\omega(\Gamma)=\alpha(\Gamma)=p^{n/2}$. Thus
Lemma~\ref{suitable-pseudo}
implies that $\omega(\Gamma)=\chi(\Gamma)=p^{n/2}$. 

(4) Suppose now that  $n$ is odd and that $\omega(\Gamma)=\chi(\Gamma)=p^t$.  
As $(p^t-1)\mid(p^n-1)/m$, we find that
$m\mid (p^n-1)/(p^t-1)$ (which is an integer as $t\mid n$). As $n/t$ is odd, 
$(p^n-1)/(p^t-1)$ is a sum of $n/t$ $p$-th powers, and hence it is an odd number, 
which
implies that $m$ is an odd number.
Therefore~(4) is valid.
\end{proof}

\section{Synchronization of some afffine groups}\label{sec:groups}

We apply the results regarding the clique and chromatic
numbers of generalized Paley graphs in Section~\ref{sec:clique} to 
obtain information about the synchronization property of 
the 1-dimensional affine groups $G_{q,m}$ for odd $q$.  
Before stating the first result, recall
that a linear group acting on a vector space $V$ is said to be 
{\em imprimitive} if it preserves a direct sum decomposition
\begin{equation}\label{dirsum}
V=V_1\oplus\cdots\oplus V_s.
\end{equation} If this is not the case, then
the such a linear group is said to be {\em primitive}.
If $p$ is a prime, then the field $\F_{p^n}$
can be considered as an $n$-dimensional vector space over $\F_p$ and 
$Z_{p^n,m}$ can be considered as a subgroup of $\GL np$.

\begin{teo}\label{imprimitivesync}
If $Z_{q,m}$ is an imprimitive subgroup of $\GL np$, then $G_{q,m}$ is non-synchronizing.
\end{teo}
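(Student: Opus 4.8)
The plan is to produce a cartesian decomposition of $\F_q$ that is invariant under $G_{q,m}$ and then invoke Lemma~\ref{cartdecomp}. Since a synchronizing group is necessarily primitive, if $G_{q,m}$ fails to be primitive then it is automatically non-synchronizing and there is nothing to prove; hence I would assume from the outset that $G_{q,m}$ is primitive, which is precisely the hypothesis needed to apply Lemma~\ref{cartdecomp}.

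View $\F_q$ as the $n$-dimensional $\F_p$-space $V$ on which $Z_{q,m}\leq\GL np$ acts, and let $V=V_1\oplus\cdots\oplus V_s$, with $s\geq 2$, be a direct sum decomposition preserved by $Z_{q,m}$. For each $i$ set $W_i=\bigoplus_{j\neq i}V_j$ and let $\mathscr P_i$ be the partition of $V$ into the cosets of $W_i$; equivalently, two vectors lie in the same part of $\mathscr P_i$ exactly when their $i$-th components in the decomposition agree. I would first check that $\Sigma=\{\mathscr P_1,\ldots,\mathscr P_s\}$ is a cartesian decomposition: each $\mathscr P_i$ is non-trivial because $0<\dim V_i<n$, so its number of parts $|V_i|$ satisfies $1<|V_i|<|V|$; and for a choice of parts $P_i\in\mathscr P_i$ the intersection $P_1\cap\cdots\cap P_s$ singles out the unique vector with prescribed component in every $V_i$, so it has size one.

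The substantive step is to verify that $G_{q,m}=T\rtimes Z_{q,m}$ preserves $\Sigma$. For the translation part this is immediate: a translation $\sigma_\alpha$ sends the coset $c+W_i$ to $(c+\alpha_i)+W_i$, where $\alpha_i$ is the $i$-th component of $\alpha$, so $\sigma_\alpha$ fixes each $\mathscr P_i$ setwise. For the linear part, an element $g\in Z_{q,m}$ permutes the summands, say $g(V_i)=V_{\tau(i)}$ for some permutation $\tau$, whence $g(W_i)=\bigoplus_{j\neq i}V_{\tau(j)}=W_{\tau(i)}$ and therefore $g$ carries the parts of $\mathscr P_i$ onto the parts of $\mathscr P_{\tau(i)}$; thus $g$ permutes the members of $\Sigma$. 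Combining the two observations, $G_{q,m}$ leaves $\Sigma$ invariant.

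With the invariant cartesian decomposition in hand, Lemma~\ref{cartdecomp} applies directly to the primitive group $G_{q,m}$ and yields that $G_{q,m}$ is non-synchronizing, completing the proof. The only point requiring care is the interaction between the affine and linear parts of $G_{q,m}$ in the invariance check, and in particular keeping track of the fact that $Z_{q,m}$ may genuinely permute the summands $V_i$ rather than fixing each of them; describing the $\mathscr P_i$ through the complementary subspaces $W_i$ is what makes this transparent.
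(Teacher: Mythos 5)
Your proposal is correct and follows essentially the same route as the paper: the same partitions $\mathscr P_i$ (parts determined by the $i$-th component, i.e.\ cosets of $W_i=\bigoplus_{j\neq i}V_j$), assembled into a cartesian decomposition preserved by $G_{q,m}$, followed by an appeal to Lemma~\ref{cartdecomp}. You merely spell out the verification steps that the paper leaves implicit, and your preliminary reduction to the case where $G_{q,m}$ is primitive is a sensible way to justify the hypothesis of Lemma~\ref{cartdecomp}.
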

\begin{proof}
Suppose that $Z_{q,m}$ preserves the direct sum decomposition
as in~\eqref{dirsum}.
Set, for $i \in \{1,\ldots, s\}$ and $v\in V_i$,
\begin{center}
$P_{i,v}=\{v_1+\cdots+v_n \in V \mid v_i=v\}$
\end{center}
and consider the partition $\mathscr P_i=\{P_{i,v} \mid v \in V_i\}$. 
Then consider the set of partitions $\Sigma=\{\mathscr P_1, 
\dots, \mathscr P_s\}$. It follows from the imprimitivity of $Z_{q,m}$ and 
the definition of cartesian decompositions that 
$\Sigma$ is a cartesian decomposition which 
is preserved by $G_{q,m}$.
Thus, by Lemma \ref{cartdecomp}, we conclude that $G_{q,m}$ is a 
non-synchronizing group.
\end{proof}

\begin{teo}\label{main2}
Suppose that $q=p^n$ where $p$ is an odd prime and let $m$ be a divisor
of $q-1$. Set $\bar m$ as in equation~$\eqref{m1}$.
\begin{enumerate}
\item If $\omega(\Gamma_{q,\bar m})=\chi(\Gamma_{q,\bar m})$ then
$G_{q,m}$ is non-synchronizing. 
\item If $m\in\{2,3\}$, then $G_{q,m}$ is non-synchronizing if and only if 
$\omega(\Gamma_{q,m})=\chi(\Gamma_{q,m})$. 
\item Suppose that $n$ is a prime and that $m\in\{2,3\}$. If
$$
m\nmid p^{n-1}+p^{n-2}+\cdots+p+1, 
$$
then
$G_{q,m}$ is synchronizing.
\item If $n$ is even and $\gcd(m,p^{n/2}+1)\neq 1$, then 
$G_{q,m}$ is non-syn\-chro\-ni\-zing.
\item If $n$ is odd, then $G_{q,2}$ is synchronizing.
\item The group $G_{p^2,2}$ is always non-syn\-chro\-ni\-zing, 
while $G_{p^2,3}$ is non-syn\-chro\-ni\-zing
if and only if $m\mid  p+1$. 

\end{enumerate}
\end{teo}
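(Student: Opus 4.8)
The whole theorem rests on Lemma~\ref{synchgraphs}: the group $G_{q,m}$ fails to be synchronizing exactly when one of the nontrivial unions of its undirected orbital graphs $\Delta_0,\dots,\Delta_{\bar m-1}$ has equal clique and chromatic numbers. By Lemma~\ref{UndirectedOrbits} every $\Gamma_i$ is isomorphic to $\Gamma_0=\Gamma_{q,\bar m}$, and the difference set of each $\Delta_i$ is one of the $\bar m$ cosets of $S_{q,\bar m}$ in $\F_q^*$; hence a $G_{q,m}$-invariant graph is the same thing as a generalized Paley-type graph whose connection set is a union of such cosets. For~(1) I would simply observe that $\Gamma_{q,\bar m}=(\F_q,\Delta_0)$ is itself such a graph, so the hypothesis $\omega=\chi$ together with Lemma~\ref{synchgraphs} gives the claim at once. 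For~(2) the point is that whenever $\Gamma_{q,m}$ is defined we have $2m\mid q-1$, so $r=(q-1)/m$ is even and $\bar m=m$; for $m\in\{2,3\}$ this gives $\bar m\leq 3$, so that every nontrivial invariant graph is, up to isomorphism, either $\Gamma_{q,m}$ or its complement $\overline{\Gamma_{q,m}}$. By Corollary~\ref{chiomega} these two have equal clique and chromatic numbers simultaneously, and thus non-synchronization becomes equivalent to $\omega(\Gamma_{q,m})=\chi(\Gamma_{q,m})$.

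Parts~(3),~(5) and~(6) then drop out by substituting Theorem~\ref{main1} into part~(2). For~(3), the contrapositive of Theorem~\ref{main1}(2) gives $\omega\neq\chi$ when $n$ is prime and $m\nmid p^{n-1}+\cdots+1$, so part~(2) forces $G_{q,m}$ to be synchronizing. For~(5), Theorem~\ref{main1}(4) gives $\omega\neq\chi$ for $\Gamma_{q,2}$ whenever $n$ is odd, and part~(2) again concludes. For~(6), the \emph{in particular} clause of Theorem~\ref{main1}(3) states that $\omega(\Gamma_{p^2,m})=\chi(\Gamma_{p^2,m})$ precisely when $m\mid p+1$; taking $m=2$ this always holds since $p$ is odd, while $m=3$ gives the stated condition $3\mid p+1$. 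In each of these I must also dispose of the degenerate case $m=2$, $q\equiv 3\pmod 4$, where $\Gamma_{q,2}$ is undefined, $\bar m=1$, and the unique undirected orbital is complete, so $G_{q,2}$ is $2$-homogeneous and hence synchronizing---which is exactly what the statements predict.

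Part~(4) is the one that needs a genuine construction. Given $n$ even and $d=\gcd(m,p^{n/2}+1)>1$, I would choose a prime $\ell\mid d$ and exhibit $\Gamma_{q,\ell}$ as a $G_{q,m}$-invariant graph with $\omega=\chi$. Because the difference sets of the $\Delta_i$ are exactly the cosets of $S_{q,\bar m}$, the connection set $S_{q,\ell}$ is a union of orbitals as soon as $S_{q,\bar m}\subseteq S_{q,\ell}$, that is, as soon as $\ell\mid\bar m$; and since $\ell\mid p^{n/2}+1$, Theorem~\ref{main1}(3) will give $\omega(\Gamma_{q,\ell})=\chi(\Gamma_{q,\ell})=p^{n/2}$, after which Lemma~\ref{synchgraphs} finishes. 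The arithmetic core is thus the divisibility $\ell\mid\bar m$. For odd $\ell$ this is automatic, since $\bar m\in\{m,m/2\}$ and halving cannot destroy an odd prime factor. For $\ell=2$ I would use that $n$ even forces $8\mid q-1$ (as $q-1=(p^{n/2}-1)(p^{n/2}+1)$ is a product of consecutive even integers): if $r$ is odd then $v_2(m)=v_2(q-1)\geq 3$, so $\bar m=m/2$ still retains a factor of $2$.

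I expect this control of the prime $2$ in part~(4) to be the main obstacle: passing from $m$ to $\bar m$ can strip away a single factor of $2$, and one must be sure that enough $2$-divisibility survives, which is exactly what $8\mid q-1$ for even $n$ guarantees. The remaining delicate points are purely bookkeeping: checking that $\bar m=m$ for the small moduli in part~(2), and handling the $\bar m=1$ degenerate configurations uniformly through the $2$-homogeneity observation.
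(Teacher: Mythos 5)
Your proof is correct and for parts (1), (2), (3), (5) and (6) it follows essentially the same route as the paper: reduce to Lemma~\ref{synchgraphs} via the classification of $G_{q,m}$-invariant graphs in Lemma~\ref{UndirectedOrbits}, use Corollary~\ref{chiomega} to pass between $\Gamma_{q,m}$ and its complement, and then feed in the relevant clauses of Theorem~\ref{main1}. Two points of genuine divergence are worth recording. First, in part~(4) the paper works with $d=\gcd(m,p^{n/2}+1)$ itself: it notes that $2d\mid q-1$, applies Theorem~\ref{main1}(3) to conclude $G_{q,d}$ is non-synchronizing via part~(1), and then uses the containment $G_{q,m}\leq G_{q,d}$ together with the (standard but unstated) fact that non-synchronization passes to subgroups. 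You instead pick a prime $\ell\mid d$ and exhibit $\Gamma_{q,\ell}$ directly as a union of undirected $G_{q,m}$-orbitals, which requires your $2$-adic bookkeeping to verify $\ell\mid\bar m$; your route is longer but self-contained, avoiding the subgroup-inheritance step, and both are valid. Second, you explicitly dispose of the degenerate configuration $m=2$, $q\equiv 3\pmod 4$, where $r$ is odd, $\bar m=1$, $\Gamma_{q,2}$ is not defined, and $G_{q,2}$ is $2$-homogeneous hence synchronizing. The paper silently asserts $\bar m=m$ for $m\in\{2,3\}$, which fails in exactly this case, so its proofs of parts~(2) and~(5) have a small gap there that your argument fills; note also that the paper's proof of part~(3) cites part~(1) where it needs the equivalence of part~(2), a slip you avoid.
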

\begin{proof}
(1) By Lemma~\ref{UndirectedOrbits}, 
the generalized Paley graph $\Gamma_{q,\bar m}$ is 
$G_{q,m}$-invariant. If $\omega(\Gamma_{q,\bar m})=\chi(\Gamma_{q,\bar m})$, 
then Lemma~\ref{synchgraphs} shows that $G_{q,m}$ is non-synchronizing.

(2) Suppose that $m\in\{2,3\}$. In this case $\bar m=m$. 
If $\Gamma$ is a $G_{q,m}$-invariant graph, then, as $G_{q,m}$ has 2 or 3 orbits
in $\Omega^{\{2\}}$, Lemma~\ref{UndirectedOrbits} shows that
$\Gamma$ is isomorphic either to $\Gamma_{q,m}$ or to the 
complement $\overline\Gamma_{q,m}$. On the other hand, 
Corollary~\ref{chiomega} shows that $\omega(\Gamma_{q,m})=\chi(\Gamma_{q,m})$ 
if
and only if $\omega(\overline\Gamma_{q,m})=\chi(\overline\Gamma_{q,m})$. 
This implies that $\omega(\Gamma)=\chi(\Gamma)$ if and only if 
$\omega(\Gamma_{q,m})=\chi(\Gamma_{q,m})$.

(3) If $n$ is prime, $m\in\{2,3\}$, and $m\nmid p^{n-1}+p^{n-2}+\cdots+p+1$, 
then Theorem~\ref{main1}(2), gives that 
$\omega(\Gamma_{q,m})\neq\chi(\Gamma_{q,m})$. Thus part~(1) shows that
$G_{q,m}$ is synchronizing.

(4) Suppose that $n$ is even and $d=\gcd(m,p^{n/2}+1)\neq 1$. 
%Then 
%$G_{q,m}\leq G_{q,d}$ and $d\mid p^{n/2}+1$. 
Since $d \mid p^{n/2}+1$, 
Theorem~\ref{main1}(3) gives that $\omega(\Gamma_{q,d})=\chi(\Gamma_{q,d})$, 
and hence, by part~(1), $G_{q,d}$ is non-synchronizing. On the other hand, as 
$d \mid m$, we have that $G_{q,m}\leq G_{q,d}$, and hence
$G_{q,m}$ is non-synchronizing also.
%
%Hence Theorem~\ref{main1}(3) 
%gives that $\omega(\Gamma_{q,d})=\chi(\Gamma_{q,d})$, and hence, by part~(1),
%$G_{q,d}$ is non-synchronizing. As $G_{q,m}\leq G_{q,d}$, we obtain 
%that $G_{q,m}$ is non-synchronizing.

(5) This follows combining part~(2) with Theorem~\ref{main1}(4).

(6) Part~(4) implies that $G_{p^2,2}$ is always non-synchronizing. By 
part~(2), $G_{p^2,3}$  is non-synchronizing if and only if 
$\omega(\Gamma_{p^2,3})=\chi(\Gamma_{p^2,3})$ which
happens if and only if $m\mid p+1$; see Theorem~\ref{main1}(3).

\end{proof}

\end{document}